\newtheorem{theorem}{Theorem}[section]
\newtheorem{lemma}[theorem]{Lemma}
\newtheorem{prop}[theorem]{Proposition}
\theoremstyle{definition}
\theoremstyle{remark}
\numberwithin{equation}{section}
\newtheorem{ass}{Assumption}
\newcommand{\ep}{\epsilon}
\begin{document}
\title[]
{Convergence property of the Quantized Distributed Gradient descent with constant stepsizes and an effective strategy for the stepsize selection} 

\author{Woocheol Choi, Myeong-su Lee}

\address{}
\email{ }


\keywords{Distributed optimization, Quantization, Convex optimization,  {Stepsize selction}}

\maketitle

\begin{abstract} In this paper, we establish new convergence results for the quantized distributed gradient descent and suggest a novel strategy of choosing the stepsizes for the high-performance of the algorithm.  Under the strongly convexity assumption on the aggregate cost function and the smoothness assumption on each local cost function, we prove   the algorithm converges exponentially fast to a small neighborhood of the optimizer whose radius depends on the stepsizes. Based on our convergence result, we suggest an effective selection of stepsizes which repeats diminishing the stepsizes after a number of specific iterations. Both the convergence results and the effectiveness of the suggested stepsize selection are also verified by the numerical experiments. 
\end{abstract}

\section{Introduction}



We are concerned with the distributed optimization in which there are $N$ agents (or players) connected by a network system characterized by a graph $\mathcal{G}$ and the objective function $f$ is composed of a sum of local functions $f_i$:
\begin{align*}
	\min\limits_{x\in\mathbb{R}^n}f(x):=\sum_{i=1}^Nf_i(x).
\end{align*}
Here, the local function $f_i$ is known only by the $i$-th agent. Each agent can exchange data with other agents of its neighborhood connected through a network. Under this setting,   the agents aim to  optimize the global objective function $f$ in a collaborative way. There has been significant interest in distributed optimization algorithms since the problem appears in various real-world applications such as the distributed control \cite{BCM,CYRC}, signal processing \cite{Boyd Gossip,QT}, and machine learning problems \cite{BCN,FCG,RB}.

Over the lasd decades, various distributed algorithms have been proposed in the literature, such as alternating direction method of multipliers (ADMM) \cite{MJ,SLYW}, distributed dual averaging \cite{FMGP,SJ}, distributed gradient descent(DGD) \cite{CK,NO,NO2}, and distributed newton method \cite{MLR}. We refer to \cite{NOR,TYWY} for a survey on the distributed optimization. 

These distributed algorithms involve communication steps of the agents. In early works, the communication was usually assumed to be implemented perfectly. However, communication bandwidth or capacity is limited in practice, and thus the data is  required to be quantized for transmittion. As the amount of data  has increased recently in various fields, including machine learning, it has become more important to develop communication-efficient distributed algorithms with evaluating their performance and efficiency (see \cite{BT,MEF}).  With this motivation, various distributed algorithms with quantized communications have been proposed and studied in the literature. We refer to \cite{DMR,DMR2,EPBI,LCWH,LHWC,NOOT2,NOOT,RN,RN2,RMHP,TMHP,YH,ZYX} for distributed algorithms involving the quantization. We also refer to  \cite{DBS,DB,HCT,SY,WYLWS} regrading the delay issue in the communication.

 In this paper, we are interested in the gradient based algorithms involving the quantized communication. The works \cite{DMR,LCWH,NOOT} studied the following distributed gradient method with quantization:
\begin{equation}\label{eq-1-0}
	x_i(k+1)=Q\bigg(\sum_{j=1}^Nw_{ij}x_{j}(k)-\alpha_k\nabla f_i(x_{i,k})\bigg)
\end{equation}
where $Q(x)$ is a quantization of the value $x$ and $w_{ij}$ is the weight for communication between agent $i$ and agent $j$. In \cite{NOOT}, the author obtained the convergence to a neighborhood when the step-sizes $\alpha_k$ is constant. The author of \cite{LCWH} considered the algorithm \eqref{eq-1-0} with a projection to bounded domain and prove d a convergence estimate for the algorithm with diminishing step-sizes. 
In \cite{DMR}, the level of quantization is set to change over iterations of the algorithm, leading to an exact convergence of the algorithm. 

The author of \cite{RMHP} proposed a variant of the DGD algorithm, so called the quantized decentralized gradient Descent(QDGD) algorithm:
  \begin{equation}
  	\begin{split}\label{QDGD}
  		x_{i}(k+1)& = (1-\ep + \ep w_{ii})x_{i}(k) + \ep \sum_{j \in N_i} w_{ij} Q(x_{j}(k)) - \alpha \ep \nabla f_i (x_{i}(k)),
  	\end{split}
  \end{equation} 
where $Q: \mathbb{R}^n \rightarrow \mathbb{R}^n$ is a quantization mapping of the following types:

\noindent \textbf{(Quantization of type 1)} A mapping $Q: \mathbb{R}^n \rightarrow \mathbb{R}^n$ satisfying
\begin{equation}\label{eq-3-52}
	\mathbb{E}\Big[ Q(x)\Big] = x\quad \textrm{and}\quad \mathbb{E}\Big[ \|Q(x) - x\|^2\Big] \leq \sigma^2,
\end{equation}
\noindent \textbf{(Quantization of type 2)} A mapping $Q: \mathbb{R}^n \rightarrow \mathbb{R}^n$ satisfying
\begin{equation}\label{eq-3-53}
	\mathbb{E}\Big[ Q(x)\Big] = x\quad \textrm{and}\quad \mathbb{E}\Big[ \|Q(x) - x\|^2 \Big] \leq \sigma^2 \|x\|^2.
\end{equation}
By choosing the parameter $\epsilon>0$ suitabley in \eqref{QDGD}, one may control the variance of the randomness introduced by quantizations. 
When each cost $f_i$ stongly convex and smooth, the work \cite{RMHP} showed that for stepsize $\alpha=c_1/T^{\delta}$ and $\epsilon=c_2/T^{3\delta/2}$ with arbitrary positive constants $c_1$ and $c_2$ and any $\delta\in(0,1/2)$, the following inequality holds:
  \begin{align*}
  	\mathbb{E}\|x_{i}(T)-x_*\|^2\leq O\left(\frac{(1+\sigma^2)}{T^{\delta}}\right),
  \end{align*}
provided that  $T\geq T_0$ for a value  $T_0>0$ determined by $\delta, c_1, c_2, f, \mathcal{G}$. 
 The work \cite{DMR2} studied the same algorithm involving a projection to bounded domain and obtained a convergence property with decreasing stepsizes for non-smooth cost functions.
  
 
The aim of this work is to establish new convergence estimates of the algorithm \eqref{QDGD}. Precisely, we will show that if the stepsizes $\alpha>0$ and $\ep>0$ are less than specific values, then the algorithm \eqref{QDGD} converges exponentially fast to a  neighborhood of the optimizer in the following sense:
\begin{align}\label{choice1}
	\begin{split}
	\mathbb{E}\|x_i (k)-{x}_*\|^2\leq&~  C_0 \max\left\{1- \frac{3\eta \alpha \epsilon}{2},1 - \frac{c\epsilon }{2}\right\}^{k} + k C_1 \max\left\{1- \frac{3\eta \alpha \epsilon}{2},1 - \frac{c\epsilon }{2}\right\}^{k-1}
\\	&+C_2\alpha^2+C_3\sigma^2\epsilon+C_4\sigma^2\frac{\epsilon}{\alpha},
\end{split}
\end{align}
where the constants $C_j >0$ for $0\leq j \leq 4$ are determined by the property of the cost functions and the initial values. The assumption on the cost functions is weakened in the sense that we only assume that the aggregate cost $f$ is stronlgy convex but do not assume the convexity of each cost $f_i$, while the previous work \cite{RMHP} assumed that either all the cost functions $f_i$ are convex or strongly convex. In addition, our result is obtained for both types of  the quantizers \eqref{eq-3-52} and \eqref{eq-3-53} in a unified way.

 The above estimate \eqref{choice1} clarifies the effect on the size of the variance $\sigma >0$ for the performance of the QDGD, which is useful for choosing an effective stepsize $\alpha >0$ and $\ep>0$ depending on the size of the variance $\sigma >0$. Based on \eqref{choice1}, we propose a strategy choosing the stepsizes that decreases the stepsizes after a finite number of iterations (see Algorithm \ref{algo}). The efficiency of the proposed stepsize selection will be also verified by the numerical experiment in Section \ref{sec-6}.

This paper is organized as follows. In Section \ref{sec-3}, we give assumptions used throughout this paper and state our convergence result. Based on the result, we propose a novel strategy for choosing the stepsizes. In Section \ref{sec-4}, we obtain two sequential estimates, which will be used in Section \ref{sec-5} to prove the main results. The numerical experiments are presented in \mbox{Section \ref{sec-6}.}

\section{Main Result}\label{sec-3}
In this section, we state our main estimates on the QDGD algorithm \eqref{QDGD}. Also we present a novel strategy for choosing the stepsizes $\alpha$ and $\epsilon$, based on the  estimates. 

Before stating the main theorem, we introduce assumptions on the cost functions and give some notations used throughout this paper. 
\begin{ass}\label{LS} For each $i\in\{1,\cdots n\}$, function 
	$f_i$ is $L_i$-smooth for some $L_i>0$, i.e., for any $x, y \in \mathbb{R}^d$ we have
	\begin{equation}\label{L-smooth}
		\| \nabla f_i(y) - \nabla f_i(x)\| \leq L_i\|y-x\|\quad \forall~x,y \in \mathbb{R}^d.  
	\end{equation} 
	We set $L = \max_{1\leq i \leq n} L_i$.
\end{ass}
\noindent We assume the strongly convexity on the total cost function.
\begin{ass}\label{sc} The function
	$f$ is $\mu$-strongly convex i.e., there exists $\mu>0$ such that 
	\begin{equation*}\label{strong}
		f(y) \geq f(x) + (y-x)\nabla f(x) + \frac{\mu}{2}\|y-x\|^2
	\end{equation*}
	for all $x,y \in \mathbb{R}^d$.
\end{ass}
\noindent The agents in the problem \eqref{QDGD} share their information of states through a communication network. It is described by an undirected graph $\mathcal{G}=(\mathcal{V},\mathcal{E})$, where each node in $\mathcal{V}$ represents an agent, and each edge $\{i,j\} \in \mathcal{E}$ means that $i$ can send messages to $j$ and vice versa. We consider a graph $\mathcal{G}$ satisfying the following assumption.
\begin{ass}\label{graph}
	The communication graph $\mathcal{G}$ is undirected and connected, i.e., there exists a path between any two agents.
\end{ass}
\noindent The mixing matrix $W=\{w_{ij}\}_{1 \leq i, j \leq N}$ consisting of the weights $w_{ij}$ in \eqref{QDGD} is related to the graph in the sense that  $w_{ij}> 0$ if $\{i,j\}\in\mathcal{E}$ and $w_{ij} = 0$ if $\{i,j\}\notin\mathcal{E}$. The following is a standard assumption  on the mixing matrix $W$. 
\begin{ass}\label{ass-1-1}
	The mixing matrix $W = \{w_{ij}\}_{1 \leq i,j \leq N}$ is doubly stochastic, i.e., $W1 =1$ and $1^T W = 1^T$. In addition, $w_{ii}>0$ for some $i \in \mathcal{V}$. 
\end{ass}
\noindent It is convenient to rewrite the algorithm \eqref{QDGD} as the vectorized form as follows:
 \begin{align}\label{QDGD2}
 	\bold{x}(k+1)=(1-\epsilon)\bold{x}(k)+\epsilon W\bold{z}(k)-\alpha\epsilon\nabla F(\bold{x}(k))
 \end{align}
 where we used the following notations:
 \begin{align*}
 	\bold{x}(k)&=(x_{1}(k),x_{2}(k),\cdots,x_{N}(k))^\top\in\mathbb{R}^{N\times n},\\
 	\bold{z}(k)&=(z_{1}(k),z_{2}(k),\cdots,z_{N}(k))^\top\in\mathbb{R}^{N\times n},\\
 	\nabla F(\bold{x}(k))&=(\nabla f_1(x_{1}(k)),\nabla f_2(x_{2}(k)),\cdots,\nabla f_N(x_{N}(k)))^\top\in\mathbb{R}^{N\times n}.
 \end{align*}
 We also set the following notations:
 \begin{align*}
 	\bold{x}_*&=(x_*,\cdots,x_*)^\top\in\mathbb{R}^{N\times n}\\
 	\bar{\bold{x}}(k)&=(\bar{x}(k),\cdots,\bar{x}(k))\in\mathbb{R}^{N\times n},\\
 	\bar{\bold{z}}(k)&=(\bar{z}(k),\cdots,\bar{z}(k))\in\mathbb{R}^{N\times n}
 \end{align*}
 where $\bar{x}(k)=\frac{1}{N}\sum_{i=1}^Nx_{i}(k)$ and $\bar{z}(k)=\frac{1}{N}\sum_{i=1}^N z_{i}(k)$. We define $\beta$ by the spectral norm of the matrix $W - \frac{1}{n} 1 1^T$, and use the following constants
\begin{equation*}
\begin{split}
C:=\|\nabla F(\mathbf{x}_*)\|/L,\qquad \eta:=\frac{\mu L}{\mu+L},\qquad
		c:=\frac{1-\beta}{2}.
		\end{split}
		\end{equation*}
	We consider the quantization $Q$ of type 1 satisfying \eqref{eq-3-52} and that of type 2 satisfying \eqref{eq-3-53}.
Now we are ready to state the  main results of this paper.
\subsection{Theoretical estimates}
		We first show that the sequence of \eqref{QDGD2} are uniformly bounded under suitable assumptions on the stepsizes. 
\begin{theorem}\label{lem1-8} Suppose that Assumptions 1-4 holds. Then the sequence $\{\mathbf{x}(k)\}_{k \geq 0}$ of \eqref{QDGD2}  is bounded for suitable stepsizes as stated in the following.
\mbox{~}
\begin{enumerate}
\item For quantization  of type 1, we set $q =\frac{5\eta^2}{L(\eta+2L)}$ and define $R>0$ by 
 	\begin{equation}\label{eq-2-R1}
 	  R=\max\Big\{\frac{4N\sigma^2}{\eta},~ 
 	\frac{4}{qc}\Big[ \frac{3L^2 \alpha_0^2}{c} + 4N\ep_0 \sigma^2\Big],~ \|\bar{\bold{x}}(0)-\bold{x}_*\|^2,~\frac{\|\bar{\bold{x}}(0)-\bold{x}(0)\|^2}{q} \Big\}.
 	\end{equation}
 	Assume that $\epsilon \alpha \leq \frac{2}{\mu +L}$, $\alpha\leq \min\left\{\frac{c}{L},\sqrt{\frac{qc^2}{12L^2}}\right\}$ and  $\epsilon\leq\min\left\{\frac{1}{2c},\alpha \right\}$. Then we have
 	\begin{equation*}
 		\mathbb{E} \|\bar{\bold{x}}(k) -\bold{x}(k)\|^2 \leq qR \quad \textrm{and}\quad \mathbb{E}\|\bar{\bold{x}}(k) - \bold{x}_*\|^2 \leq R
 	\end{equation*}
 	for all $k \geq 0$.
 	\item For quantization of type 2, we set $q = \frac{\eta^2}{L(\eta + 2L)}$ and define $R>0$ by
 	 	\begin{equation}\label{eq-2-R2}
 	  R=\max\Big\{\frac{8\sigma^2\|\bold{x}_*\|^2}{\eta},~ 
 	\frac{4}{qc}\Big[ \frac{3L^2 \alpha_0^2}{c} + 8\ep_0 \sigma^2\|\bold{x}_*\|^2\Big],~ \|\bar{\bold{x}}(0)-\bold{x}_*\|^2,~\frac{\|\bar{\bold{x}}(0)-\bold{x}(0)\|^2}{q} \Big\}.
 	\end{equation}
 	 	Assume that $\epsilon \alpha \leq \frac{2}{\mu +L}$, $\alpha \leq \frac{c}{L}$, $\epsilon\leq\min\left\{\frac{1}{2c},\alpha, \frac{\eta\alpha}{2\sigma^2}\right\}$ and
 	 	\begin{equation*}
\frac{3}{c}L^2 \alpha^2 + 8 \epsilon \sigma^2 \leq \frac{qc}{4}.
 	 	\end{equation*}
 	 	Then we have
 	\begin{equation*}
 		\mathbb{E} \|\bar{\bold{x}}(k) -\bold{x}(k)\|^2 \leq qR \quad \textrm{and}\quad \mathbb{E}\|\bar{\bold{x}}(k) - \bold{x}_*\|^2 \leq R
 	\end{equation*}
 	for all $k \geq 0$.
 	\end{enumerate}
 \end{theorem}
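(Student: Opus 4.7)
The plan is to prove both estimates simultaneously by induction on $k$. The base case $k=0$ follows directly from the construction of $R$, since \eqref{eq-2-R1} (resp.\ \eqref{eq-2-R2}) explicitly dominates $\|\bar{\mathbf{x}}(0)-\mathbf{x}_*\|^2$ and $\|\bar{\mathbf{x}}(0)-\mathbf{x}(0)\|^2/q$. For the inductive step, I would invoke two coupled recursive inequalities---the ``two sequential estimates'' promised for Section \ref{sec-4}---one controlling the optimality gap $\mathbb{E}\|\bar{\mathbf{x}}(k+1)-\mathbf{x}_*\|^2$ and one controlling the consensus error $\mathbb{E}\|\mathbf{x}(k+1)-\bar{\mathbf{x}}(k+1)\|^2$ in terms of the same two quantities at step $k$, plus quantization-variance tails.

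To set up the optimality recursion, I would average the vectorized iteration \eqref{QDGD2}: double-stochasticity of $W$ gives $\overline{W\mathbf{z}(k)}=\bar{\mathbf{z}}(k)$, and unbiasedness of $Q$ gives $\mathbb{E}[\bar{\mathbf{z}}(k)\mid \mathbf{x}(k)]=\bar{\mathbf{x}}(k)$, so the averaged update reduces in conditional expectation to a gradient-descent step with stepsize $\alpha\epsilon$. Decomposing $\overline{\nabla F(\mathbf{x}(k))}$ as $\frac{1}{N}\nabla f(\bar{\mathbf{x}}(k))$ plus a remainder controlled by $L$ times the consensus error, and applying the classical strongly-convex, $L$-smooth contraction $\|y-\eta\nabla f(y)-\mathbf{x}_*\|^2 \leq (1-\eta\mu)\|y-\mathbf{x}_*\|^2$ valid when $\eta\leq 2/(\mu+L)$, should yield a factor of order $1-\eta\alpha\epsilon$ on the optimality gap together with an $L^2\alpha^2$ multiple of the consensus error and a $\sigma^2\epsilon^2$ (or $\sigma^2\epsilon^2\|\mathbf{x}(k)\|^2$) noise contribution. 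For the consensus recursion I would apply the projector $I-\tfrac{1}{N}\mathbf{1}\mathbf{1}^\top$ to \eqref{QDGD2}; since the restricted operator norm of $W$ on the orthogonal subspace is $\beta$, Young's inequality yields contraction factor $(1-c\epsilon)$ on $\mathbb{E}\|\mathbf{x}(k)-\bar{\mathbf{x}}(k)\|^2$, smoothness absorbs the gradient term into $\alpha^2\epsilon$ times $L^2\|\mathbf{x}(k)-\bar{\mathbf{x}}(k)\|^2+\|\nabla F(\mathbf{x}_*)\|^2$, and the quantization variance contributes $\sigma^2\epsilon^2$ (type 1) or $\sigma^2\epsilon^2\mathbb{E}\|\mathbf{x}(k)\|^2$ (type 2).

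With these two recursions in hand, closing the induction is an algebraic check that, under $\mathbb{E}\|\mathbf{x}(k)-\bar{\mathbf{x}}(k)\|^2\leq qR$ and $\mathbb{E}\|\bar{\mathbf{x}}(k)-\mathbf{x}_*\|^2\leq R$, the new right-hand sides remain below $qR$ and $R$ respectively. The specific choices $q=5\eta^2/[L(\eta+2L)]$ (resp.\ $\eta^2/[L(\eta+2L)]$) and the additive constants in the definition of $R$ are designed precisely for this balance, so that the hypothesized stepsize bounds $\alpha\epsilon\leq 2/(\mu+L)$, $\alpha\leq\min\{c/L,\sqrt{qc^2/(12L^2)}\}$, $\epsilon\leq\min\{1/(2c),\alpha\}$ translate directly into the desired inequalities at step $k+1$. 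The main obstacle is the type-2 case, where the variance $\sigma^2\|\mathbf{x}(k)\|^2$ couples back into the sequence itself: I would control it via $\mathbb{E}\|\mathbf{x}(k)\|^2\leq 2\mathbb{E}\|\mathbf{x}(k)-\mathbf{x}_*\|^2+2\|\mathbf{x}_*\|^2\leq 2(1+q)R+2\|\mathbf{x}_*\|^2$ from the induction hypothesis, after which the extra restrictions $\epsilon\leq\eta\alpha/(2\sigma^2)$ and $3L^2\alpha^2/c+8\epsilon\sigma^2\leq qc/4$ are exactly what is needed to dominate the multiplicative noise by the strong-convexity contraction $\tfrac{3}{2}\eta\alpha\epsilon$ in the optimality recursion and to absorb the residual term into $qR/4$ in the consensus recursion.
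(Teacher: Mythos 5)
Your proposal is correct and follows essentially the same route as the paper: an induction whose inductive step rests on exactly the two coupled recursions (consensus error contracting at rate $1-c\epsilon/2$ via the spectral gap of $W$, optimality gap contracting at rate $1-\tfrac{3}{2}\eta\alpha\epsilon$ via the strongly-convex/smooth gradient step on the averaged iterate), which the paper isolates as Proposition \ref{lem-1-6} and Lemma \ref{lem-1-7} before closing the algebra with the stated choices of $q$ and $R$. Your treatment of the type-2 multiplicative noise via $\|\cdot\|^2\le 2\|\cdot-\mathbf{x}_*\|^2+2\|\mathbf{x}_*\|^2$ and the conditions $\epsilon\le\eta\alpha/(2\sigma^2)$, $3L^2\alpha^2/c+8\epsilon\sigma^2\le qc/4$ is likewise the paper's argument (your bound $2(1+q)R$ is if anything slightly more careful than the paper's $2R$).
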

We mention that the above result is obtained assuming the strongly convexity only for the total cost $f$. The above boundedness result might be guaranteed for a wider ranges of the stepsizes $\ep >0$ and $\alpha>0$ if we impose more conditions on the cost functions, e.g., convexity of each local cost $f_j$. This is also observed in the numerical simulations.  Having this perspective, and for simplicity of the statements for the convergence results, we propose the following assumption.  
\begin{ass}\label{ass-2} There exist  $R>0$ and $q>0$ such that
 \begin{equation*}
 		\mathbb{E} \|\bar{\bold{x}}(k) -\bold{x}(k)\|^2 \leq qR \quad \textrm{and}\quad \mathbb{E}\|\bar{\bold{x}}(k) - \bold{x}_*\|^2 \leq R
 	\end{equation*}
 	for all $k \geq 0$.
 	\end{ass}
 
Under this assumption, we obtain the following convergence estimate of the algorithm for the constant stepsizes.
 \begin{theorem}\label{thm-1} Suppose that Assumptions 1-5 hold and assume that $\ep  \alpha  \leq \frac{2}{\mu +L}$. 
\begin{enumerate}
	\item For quantization  of type 1,  we have the following estimate
 \begin{equation*}
\begin{split}
&\mathbb{E}\|\bar{\bold{x}}(k)-\bold{x}_*\|^2
\\
  &\leq\left(1- \frac{3\eta \alpha \epsilon}{2}\right)^k\mathbb{E}\|\bar{\bold{x}}(0)-\bold{x}_*\|^2
  \\&\quad+ kL \alpha \epsilon \Big( \frac{1}{4}+ \frac{L}{2\eta}\Big)\max\left\{1- \frac{3\eta \alpha \epsilon}{2},1 - \frac{c\epsilon }{2}\right\}^{k-1}\mathbb{E}\|\bar{\bold{x}}(0)-\bold{x}(0)\|^2\\
 &\quad+\frac{4L}{3\eta c}\Big( \frac{1}{4}+ \frac{L}{2\eta}\Big)\frac{2}{c}\left[\frac{3L^2 \alpha^2}{c} (R  + C^2)+4N\epsilon\sigma^2\right] +\frac{2N\epsilon\sigma^2}{3\eta\alpha}
\end{split}
\end{equation*}
and
\begin{align*}
&\mathbb{E}\|\bar{\bold{x}}(k)-\bold{x}(k)\|^2  \\
&\leq \left(1-\frac{c\ep}{2}\right)^k \mathbb{E}\|\bar{\bold{x}}(0)-\bold{x}(0)\|^2 + \frac{2}{c}\Big[ \frac{3L^2 \alpha^2}{c}(R  +C^2) + 4N \ep \sigma^2\Big].
\end{align*}
\item For quantization  of type 2, we have the following estimate
\begin{align*}
	\mathbb{E}\|\bar{\bold{x}}(k)-\bold{x}_*\|^2 &\leq  \left(1- \frac{3\eta \alpha \epsilon}{2}\right)^k\mathbb{E}\|\bar{\bold{x}}(0)-\bold{x}_*\|^2\\
	&\quad+ kL \alpha \epsilon \Big( \frac{1}{4}+ \frac{L}{2\eta}\Big)\max\left\{1- \frac{3\eta \alpha \epsilon}{2},1 - \frac{c\epsilon }{2}\right\}^{k-1}\mathbb{E}\|\bar{\bold{x}}(0)-\bold{x}(0)\|^2\\
	&\quad+\frac{4L}{3\eta c}\Big( \frac{1}{4}+ \frac{L}{2\eta}\Big) \left[\frac{3L^2 \alpha^2}{c} (R  + C^2)+8\epsilon\sigma^2(R+\|\bold{x}_*\|^2)\right] \\
	&\quad+\frac{4(R+\|\bold{x}_*\|^2)\epsilon\sigma^2}{3\eta\alpha},
\end{align*}
and
\begin{align*}
	&\mathbb{E}\|\bar{\bold{x}}(k)-\bold{x}(k)\|^2\\
	&\leq \left(1 - \frac{c\epsilon }{2}\right)^k \mathbb{E}\|\bar{\bold{x}}(0)-\bold{x}(0)\|^2+\frac{2}{c}\left[\frac{3L^2 \alpha^2}{c} (R  + C^2) + 8\epsilon\sigma^2(R+\|\bold{x}_*\|^2)\right].\\
\end{align*}
\end{enumerate}
\end{theorem}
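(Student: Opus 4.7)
The plan is to derive two coupled recursions in $k$---one for the optimality error $\mathbb{E}\|\bar{\mathbf{x}}(k)-\mathbf{x}_*\|^2$ and one for the consensus error $\mathbb{E}\|\bar{\mathbf{x}}(k)-\mathbf{x}(k)\|^2$---and then iterate them together.

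Since $W$ is doubly stochastic, averaging \eqref{QDGD2} gives $\bar{\mathbf{x}}(k+1) = \bar{\mathbf{x}}(k) + \epsilon(\bar{\mathbf{z}}(k)-\bar{\mathbf{x}}(k)) - \alpha\epsilon\,\overline{\nabla F}(\mathbf{x}(k))$. Subtracting $\mathbf{x}_*$, squaring, and taking conditional expectation eliminates the quantization cross term by unbiasedness. The deterministic part $\bar{\mathbf{x}}(k)-\mathbf{x}_*-\alpha\epsilon\,\overline{\nabla F}(\bar{\mathbf{x}}(k))$ contracts by the factor $1-2\eta\alpha\epsilon$ via the classical co-coercivity estimate under $\alpha\epsilon \leq 2/(\mu+L)$, and the gradient-mismatch term $\overline{\nabla F}(\bar{\mathbf{x}}(k))-\overline{\nabla F}(\mathbf{x}(k))$ is controlled by $L$-smoothness. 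A Young's-inequality split that sacrifices a quarter of $\eta\alpha\epsilon$ for absorption yields a one-step inequality of the form
\[
\mathbb{E}\|\bar{\mathbf{x}}(k+1)-\mathbf{x}_*\|^2 \leq \Big(1-\tfrac{3\eta\alpha\epsilon}{2}\Big)\mathbb{E}\|\bar{\mathbf{x}}(k)-\mathbf{x}_*\|^2 + L\alpha\epsilon\Big(\tfrac{1}{4}+\tfrac{L}{2\eta}\Big)\mathbb{E}\|\bar{\mathbf{x}}(k)-\mathbf{x}(k)\|^2 + \epsilon^2 V,
\]
where $V$ bounds $\mathbb{E}\|\bar{\mathbf{z}}(k)-\bar{\mathbf{x}}(k)\|^2$: by Cauchy--Schwarz across the $N$ agents one has $V \leq N\sigma^2$ for type~1 and, after converting the state-dependent variance via Assumption~\ref{ass-2}, $V \leq 2\sigma^2(R+\|\mathbf{x}_*\|^2)$ for type~2.

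For the consensus error, projecting \eqref{QDGD2} onto $I-\tfrac{1}{N}\mathbf{1}\mathbf{1}^\top$ and using $\|W-\tfrac{1}{N}\mathbf{1}\mathbf{1}^\top\|_2 = \beta = 1-2c$ produces a contraction factor of about $1-c\epsilon$. The gradient forcing term contributes $O(\alpha^2\epsilon^2 L^2)(R+C^2)$ via $\|\nabla F(\mathbf{x})\|^2 \leq 2L^2(\|\mathbf{x}-\mathbf{x}_*\|^2+C^2)$ combined with Assumption~\ref{ass-2}, and the quantization noise contributes $O(\epsilon^2\sigma^2)$ in both cases after the same conversion for type~2. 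Splitting off half of the spectral-gap factor for absorption produces
\[
\mathbb{E}\|\bar{\mathbf{x}}(k+1)-\mathbf{x}(k+1)\|^2 \leq \Big(1-\tfrac{c\epsilon}{2}\Big)\mathbb{E}\|\bar{\mathbf{x}}(k)-\mathbf{x}(k)\|^2 + E,
\]
whose iteration gives the second estimate in each case directly. Plugging this explicit form back into the optimality recursion produces a discrete convolution $\sum_{j=0}^{k-1}(1-\tfrac{3\eta\alpha\epsilon}{2})^{k-1-j}(1-\tfrac{c\epsilon}{2})^{j}$, bounded crudely by $k\,\max\{1-\tfrac{3\eta\alpha\epsilon}{2},\,1-\tfrac{c\epsilon}{2}\}^{k-1}$; this supplies the linear-in-$k$ factor in the first estimate. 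Summing the remaining constant forcing against the geometric series $\sum(1-\tfrac{3\eta\alpha\epsilon}{2})^j \leq \tfrac{2}{3\eta\alpha\epsilon}$ yields the prefactors $\tfrac{4L}{3\eta c}(\tfrac{1}{4}+\tfrac{L}{2\eta})$ and $\tfrac{2N\sigma^2\epsilon}{3\eta\alpha}$ (and their type~2 analogues).

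The principal obstacle is the careful constant accounting needed to arrive at the sharper rate $1-\tfrac{3\eta\alpha\epsilon}{2}$ (rather than the baseline $1-2\eta\alpha\epsilon$) together with the precise coefficient $L\alpha\epsilon(\tfrac{1}{4}+\tfrac{L}{2\eta})$ in front of the consensus term; this requires the Young splits to be tuned exactly against the strong-convexity parameter $\eta$, with the smallness assumptions on $\alpha$ and $\epsilon$ used to absorb residual $O(\alpha^2\epsilon^2)$ terms. A secondary obstacle is unifying the two quantization types: Assumption~\ref{ass-2} is precisely what allows the type~2 state-dependent variance $\sigma^2\|\mathbf{x}\|^2$ to be replaced by the deterministic constant $\sigma^2(R+\|\mathbf{x}_*\|^2)$, after which the two arguments become formally identical and only the constants in $V$ and $E$ change.
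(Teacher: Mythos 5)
Your proposal is correct and follows essentially the same route as the paper: the two coupled one-step recursions you describe are exactly the paper's Proposition \ref{lem-1-6} and Lemma \ref{lem-1-7} (with the same contraction factors $1-\tfrac{3\eta\alpha\epsilon}{2}$ and $1-\tfrac{c\epsilon}{2}$, the same coefficient $L\alpha\epsilon(\tfrac14+\tfrac{L}{2\eta})$ obtained by the same Young splits, and the same use of Assumption \ref{ass-2} to replace the type-2 variance by $2\sigma^2(R+\|\mathbf{x}_*\|^2)$), and the subsequent iteration, convolution bound $k\max\{\cdot\}^{k-1}$, and geometric-series summation match the paper's proof of Theorem \ref{thm-1} step for step. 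The only point worth noting is that, as you implicitly acknowledge, the consensus recursion additionally requires the smallness conditions $\alpha\le c/L$ and $\epsilon\le 1/(2c)$ from Proposition \ref{lem-1-6}, which the theorem statement leaves tacit.
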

The above convergene results imply that the sequence $\{x_i (k)\}_{k \geq 0}$ converges exponentially fast to a neighborhood of the opitmizer $x_*$ with radius depending on the stepsizes and the variance $\sigma >0$ of the quantization. From the above estimates, we may guess that the sequence $\{x_i (k)\}_{k \geq 0}$ would coverge to the optimizer $x_*$ if we repeat  adjusting the stepsizes suitably after a finite number of iterations with fixed stepsizes. Having this motivation, we propose an effective strategy for adjusting the stepsizes in the following subsection.

\subsection{Strategy for stepsize selection}
With focusing on the effects of the sizes for the stepsizes $\epsilon$, $\alpha$ and the quantization level $\sigma$, we may add the two estimates in Theorem \ref{thm-1} to find the following estimate
\begin{align}\label{choice}
	\begin{split}
	\mathbb{E}\|\bold{x}(k)-\bold{x}_*\|^2\leq& \max\left\{1- \frac{3\eta \alpha \epsilon}{2},1 - \frac{c\epsilon }{2}\right\}^{k}
	\mathbb{E}\|\bold{x}(0)-\bold{x}_*\|^2
\\
& + kL \alpha \epsilon \Big( \frac{1}{4}+ \frac{L}{2\eta}\Big)\max\left\{1- \frac{3\eta \alpha \epsilon}{2},1 - \frac{c\epsilon }{2}\right\}^{k-1}\mathbb{E}\|\bar{\bold{x}}(0)-\bold{x}(0)\|^2
\\	&+C_1\alpha^2+C_2\sigma^2\epsilon+C_3\sigma^2\frac{\epsilon}{\alpha},
\end{split}
\end{align}
where  $C_i$, $(i=1,\cdots,3)$, are constants depending only on the cost functions $f_i$ and the communication graph $\mathcal{G}$. 
In the above estimate, we observe that the first two terms involve the following decaying weight:
\begin{equation*}
\max\left\{1- \frac{3\eta \alpha \epsilon}{2},1 - \frac{c\epsilon }{2}\right\}^{k}.
\end{equation*}
On the other hand, the remaining three terms $C_1\alpha^2+C_2\sigma^2\epsilon+C_3\sigma^2\frac{\epsilon}{\alpha}$ are fixed when the stepsizes $\alpha >0$ and $\ep>0$ are fixed. Here we observe that $C_2 \sigma^2 \ep < C_3 \sigma^2 \frac{\ep}{\alpha}$ provided that $\alpha >0$ is small enough. Therefore, it is reasonable to focus on the following two terms
\begin{equation}\label{eq-3-50}
C_1 \alpha^2 + C_3 \sigma^2 \frac{\ep}{\alpha}.
\end{equation}
Inspired by the estimate \eqref{choice}, we repeat iterating the algorithm \eqref{QDGD} with fixed stepsizes for a finite number of iterations and then diminishing the stepsizes by a certain rule. The rule of our strategy is divided into three phases as described in the below.

\medskip

\noindent \textbf{(Phase 1)} For the fixed stepsizes $\alpha =\alpha_0 >0$ and $\ep=\ep_0 >0$, we perform the algorithm up to $M$ times for some $M \in \mathbb{N}$ so that the decaying part  in the estimate \eqref{choice}
\begin{equation*}
\begin{split}
&\max\left\{1- \frac{3\eta \alpha \epsilon}{2},1 - \frac{c\epsilon }{2}\right\}^{M} \mathbb{E}\|\bold{x}(0)-\bold{x}_*\|^2
\\
&\quad + M L \alpha \epsilon \Big( \frac{1}{4}+ \frac{L}{2\eta}\Big)\max\left\{1- \frac{3\eta \alpha \epsilon}{2},1 - \frac{c\epsilon }{2}\right\}^{M-1}\mathbb{E}\|\bar{\bold{x}}(0)-\bold{x}(0)\|^2
\end{split}
\end{equation*}
becomes small enough. \\

After the $M$-iterations, we diminish the stepsizes $\alpha$ and $\epsilon$ in a proper way to make the last three terms in \eqref{choice} smaller. Since reducing the stepsizes results in slowing down the convergence speed of the exponential part in \eqref{choice}, it is important to adjust the stepsize carefully. In the error term \eqref{eq-3-50}, we note that if the following inequality holds:
\begin{equation}\label{trigger}
	C_1 \alpha^2 \gg C_3 \sigma^2 \frac{\ep}{\alpha},
\end{equation}
then it is reasonable to adjust only the stepsize $\alpha$ for reducing the error term. Having this in mind, we adopt different update rules depending on whether the inequality \eqref{trigger} holds or not. Since we usually do not know the exact values of $C_1$ and $C_3$ in real applications, we choose a value $K$ instead of $\frac{C_1}{C_3}$ and check the following condition instead of \eqref{trigger}:
\begin{equation}\label{eq-3-31}
	K\alpha^2 > \sigma^2 \frac{\ep}{\alpha}.
\end{equation}

For the update rules, we use the notation $\alpha_{new}$ and $\ep_{new}$ to denote the new stepsizes from the update and denote by $\alpha_{prev}$ and $\ep_{prev}$ to denote the latest stepsizes before the update.

\noindent \textbf{(Phase 2)} We choose a sufficiently large integer $J= J_{prev}$  such that 
\begin{equation}\label{eq-3-51}
\max\left\{ 1- \frac{3 \eta \alpha_{prev} \ep}{2},~1 - \frac{c\ep}{2}\right\}^{J} < \frac{1}{2}
\end{equation}
 and we perform the algorithm for $J$-iterations. 
If the inequality \eqref{eq-3-31} holds, then we update only the stepsize $\alpha$ and the value $J$ by the following way 
\begin{align*}
	\alpha_{new} \longleftarrow\frac{\alpha_{prev}}{2}, \qquad J_{new}\longleftarrow 2J_{prev}.
\end{align*}
With these updated $\alpha_{new}$ and $J_{new}$, we perform the algorithm for  $J=J_{new}$ iterations and repeat this process as long as the inequality \eqref{eq-3-31} holds.

The reason why we adjust $J$ by $2J$   is that    choosing smaller $\alpha$ makes the decaying speed slower in the exponential decaying term \eqref{eq-3-51}. More precisely, we observe that
\begin{equation*}
\Big\{ 1 - \frac{3 \eta \alpha_{new} \ep}{2}\Big\}^{2} = \Big\{ 1 - \frac{3 \eta \alpha_{prev} \ep}{4}\Big\}^{2} \simeq 1 - \frac{3\eta \alpha_{prev} \ep}{2},
\end{equation*}
and so 
\begin{equation}
\max\left\{ 1- \frac{3 \eta \alpha_{new} \ep}{2},~1 - \frac{c\ep}{2}\right\}^{2J} \simeq\max\left\{ 1- \frac{3 \eta \alpha_{prev} \ep}{2},~1 - \frac{c\ep}{2}\right\}^{J}.
\end{equation}


\noindent \textbf{(Phase 3)} If the inequality \eqref{eq-3-31} violates, i.e., 
\begin{equation*}
K\alpha^2 \leq \sigma^2 \frac{\ep}{\alpha},
\end{equation*}
then we begin to update both the stepsizes $\alpha >0$ and $\ep>0$ in a way that
\begin{equation*}
\alpha_{new} \longleftarrow \frac{\alpha_{prev}}{\sqrt{2}},\qquad \ep_{new} \longleftarrow \frac{\ep_{prev}}{2 \sqrt{2}}, \qquad J_{new} \longleftarrow4J_{prev}.
\end{equation*}
Notice that the value $C_2 \alpha^2 + C_4 \sigma^2 \frac{\ep}{\alpha}$ becomes half for new $\alpha$ and $\ep$, i.e., 
\begin{equation*}
c_2 \alpha_{new}^2 + c_4 \sigma^2 \frac{\ep_{new}}{\alpha_{new}} = \frac{1}{2} \Big(c_2 \alpha_{prev}^2 + c_4 \sigma^2 \frac{\ep_{prev}}{\alpha_{prev}}\Big),
\end{equation*}
and we also note that
\begin{equation*}
\left\{1 - \frac{3\eta \alpha_{new}\ep_{new}}{2}\right\}^4 = \left\{1 - \frac{3 \eta \alpha_{prev} \ep_{prev}}{8}\right\}^4\simeq 1-\frac{3\eta\alpha_{prev}\ep_{prev}}{2}.
\end{equation*}
Thus it is reasonable to quadruple the number of iterations $J$ for the updated stepizes in view of \eqref{eq-3-51}. The overall strategy  is stated in Algorithm \ref{algo}.
 \begin{algorithm}\caption{Stepsize selection for the quantized decentralized gradient descent}\label{algo}
	\begin{algorithmic}
	\State{Take a value $K>0$ and $M,J \in \mathbb{N}$. Choose initial stepsizes $\alpha(0)=\alpha >0$ and $\ep(0)=\ep>0$. Set: $ind=1$ and $\text{Trigger} =0$.}
	\For{$it = 0, 1, \cdots M-1$}
	\State{Set $\alpha (t) = \alpha$ and $\ep(t) = \ep$}
		\EndFor
       \State{Set $t_Q = M$}
	\For{$it=M,M+1,\cdots$,}
         
       \If {$\mbox{mod}(it-t_Q, J*ind)=0$}
       		\If {$K\alpha^2 < \frac{\sigma^2 \ep}{\alpha}$}
       		\State{Trigger=1}
       		\EndIf
            \If { $\text{Trigger}=0$}
            \State{Let  $\alpha=\frac{\alpha}{2}$. Set $t_Q = it$ and $ind =2 \cdot  ind$}
            \EndIf
            \If { $\text{Trigger} =1$}
            \State{Let $\alpha  =\frac{\alpha}{\sqrt{2}}$, 
         $\ep = \frac{\ep}{2\sqrt{2}}$. Set $t_Q = it$ and 
         $ind= 4\cdot ind$}
            \EndIf
        \EndIf
        \State{Set $\alpha (t) = \alpha$ and $\ep(t) = \ep$}
		\EndFor
	\end{algorithmic}
\end{algorithm} 

 \section{Sequential estimates}\label{sec-4}
 In this section, we establish two sequential estimates of $\mathbb{E}\|\bar{x}_{t+1} - x_*\|^2$ and $\mathbb{E} \|x(t+1) - \bar{x}(t+1)\|^2$. We begin with recalling the following result in \cite[Lemma 1]{PN}.
 \begin{lemma}[\cite{PN}]\label{lem-1-1}
 	Suppose the Assumption 3 and Assumption 4 hold. Then the spectral norm $\beta$ of the matrix $W - \frac{1}{n} 1 1^T $ satisfies $0<\beta <1$ and   
 	\begin{equation*}
 		\Big\| W\bold{x}-\bar{\bold{x}} \Big\|^2 \leq \beta^2 \|\bold{x}-\bar{\bold{x}}\|^2.
 	\end{equation*}
 	for any $\bold{x} = (x_1, \cdots, x_N) \in \mathbb{R}^{N\times n}$.
 \end{lemma}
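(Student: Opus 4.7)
The plan is to reduce the claim to two ingredients: the identity $W\bar{\mathbf{x}}=\bar{\mathbf{x}}$ (which lets us replace $\mathbf{x}$ by $\mathbf{x}-\bar{\mathbf{x}}$ in the norm estimate), and the spectral gap statement $\beta<1$ for the doubly stochastic matrix $W$.

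First I would set $M := W - \tfrac{1}{n}\mathbf{1}\mathbf{1}^T$, so that $\beta=\|M\|_2$. By Assumption \ref{ass-1-1}, the $i$-th row of $W\bar{\mathbf{x}}$ is $\sum_{j} w_{ij}\bar{x}=\bar{x}$, and the $i$-th row of $\frac{1}{n}\mathbf{1}\mathbf{1}^T\bar{\mathbf{x}}$ is also $\bar{x}$; hence $M\bar{\mathbf{x}}=0$. On the other hand, for any $\mathbf{x}\in\mathbb{R}^{N\times n}$, direct computation gives $\tfrac{1}{n}\mathbf{1}\mathbf{1}^T\mathbf{x}=\bar{\mathbf{x}}$, so $M\mathbf{x}=W\mathbf{x}-\bar{\mathbf{x}}$ and therefore $W\mathbf{x}-\bar{\mathbf{x}}=M(\mathbf{x}-\bar{\mathbf{x}})$. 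Applying the spectral-norm bound column-wise and summing, one has $\|M(\mathbf{x}-\bar{\mathbf{x}})\|^2\leq \beta^2\|\mathbf{x}-\bar{\mathbf{x}}\|^2$, which is the desired estimate.

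Next I would establish $\beta<1$. Using symmetry of $W$ (which is the standard convention for undirected graphs with symmetric weights; it can alternatively be absorbed into Assumption \ref{ass-1-1}), the eigenvalues $\lambda_1 \geq \cdots \geq \lambda_N$ of $W$ are real with $|\lambda_i|\leq 1$, and $\lambda_1=1$ with eigenvector $\mathbf{1}$. Since $M$ annihilates $\mathbf{1}$ and agrees with $W$ on $\mathbf{1}^\perp$, the eigenvalues of $M$ are $0,\lambda_2,\ldots,\lambda_N$, so $\beta=\max_{i\geq 2}|\lambda_i|$. By Perron--Frobenius on the connected graph (Assumption \ref{graph}), the eigenvalue $1$ is simple, hence $\lambda_2<1$. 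It remains to exclude $\lambda_N=-1$: I would argue by contradiction, showing that a $-1$-eigenvector would force a $2$-periodic (bipartite) structure on the support of $W$, which is incompatible with the existence of a vertex $i$ with $w_{ii}>0$ (Assumption \ref{ass-1-1}). Thus $-1<\lambda_N$ and $\beta<1$. The lower bound $\beta>0$ is the trivial case: $\beta=0$ would force $W=\tfrac{1}{n}\mathbf{1}\mathbf{1}^T$, which is excluded whenever the graph is not complete with uniform weights.

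The main obstacle is the step ruling out $\lambda=-1$. Everything else is straightforward linear algebra, but the aperiodicity condition has to be used in exactly this place; the condition $w_{ii}>0$ for some $i$ in Assumption \ref{ass-1-1} is the minimal hypothesis that achieves this, and the cleanest argument is the bipartite-structure one above. Once $\beta<1$ is in hand, the rest of the lemma follows immediately from the identity $M\mathbf{x}=M(\mathbf{x}-\bar{\mathbf{x}})$.
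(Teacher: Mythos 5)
Your proof is correct. Note first that the paper does not actually prove this lemma: it is quoted from \cite{PN} (Lemma~1 there) and no in-paper argument is given, so your write-up supplies a proof where the paper only has a citation. Both halves of your argument are sound. The identity $W\mathbf{x}-\bar{\mathbf{x}}=M(\mathbf{x}-\bar{\mathbf{x}})$ with $M=W-\tfrac{1}{N}\mathbf{1}\mathbf{1}^{T}$ follows exactly as you say from double stochasticity (the paper's $\tfrac{1}{n}$ is a typo for $\tfrac{1}{N}$, which you implicitly correct), and the spectral-gap step is the classical Perron--Frobenius argument: connectedness gives simplicity of the eigenvalue $1$, and the single positive diagonal entry rules out the bipartite structure that a $(-1)$-eigenvector would force, since a self-loop at $i$ would require $v_i=-v_i$ while $|v_i|$ equals the maximal modulus. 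Two small caveats, both of which you already flag: (i) symmetry of $W$ is not literally contained in Assumption~\ref{ass-1-1} but is needed to identify the spectral norm of $M$ with $\max_{i\ge 2}|\lambda_i|$ (without symmetry $\beta$ is a largest singular value, and the strict bound $\beta<1$ can fail for doubly stochastic matrices); and (ii) the claim $\beta>0$ is genuinely false for the complete graph with uniform weights, where $W=\tfrac{1}{N}\mathbf{1}\mathbf{1}^{T}$ and $\beta=0$, though nothing downstream uses the lower bound since $c=(1-\beta)/2$ stays positive. Neither caveat is a gap in your reasoning; they are imprecisions inherited from the statement of the lemma itself.
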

 \noindent Using this lemma, we dervie a sequential estimate for the consensus error.
 \begin{prop}\label{lem-1-6} For $\alpha\leq \frac{c}{L}$ and $\epsilon\leq\frac{1}{2c}$, we have
 	\begin{equation*}
 		\begin{split}
 			&\mathbb{E}\Big[ \|\bold{x}(k+1) - \bar{\bold{x}}(k+1)\|^2|\bold{x}(k)\Big]
 			\\
 			&= (1-c\epsilon/2) \|\bold{x}(k) - \bar{\bold{x}}(k)\|^2 + \frac{3L^2 \alpha^2 \epsilon}{c} \Big( \|\bar{\bold{x}}(k) - \bold{x}_*\|^2 + C^2\Big)+ 4\beta^2 \epsilon^2 r(k),
 		\end{split}
 	\end{equation*}
 	where $C=\|\nabla F(x_*)\|/L$, and $c=(1-\beta)/2$. Also 
 		\begin{equation*}
 r(k)=	 \bigg\{\begin{array}{ll} N  \sigma^2 & \textrm{for quantization of type 1 satisfying \eqref{eq-3-52}},
 		\\
 	\sigma^2 \|\bar{\bold{x}}(k)\|^2 & \textrm{for quantization of type 2 satisfying \eqref{eq-3-53}}.
 		\end{array}
\end{equation*}
 \end{prop}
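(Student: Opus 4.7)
My plan is to first rewrite the error update in a form that isolates a contracting consensus piece, a mean-zero quantization noise piece, and a gradient piece. Apply the averaging operator $J=\tfrac{1}{N}\mathbf{1}\mathbf{1}^\top$ to \eqref{QDGD2}; since $W$ is doubly stochastic, $JW=WJ=J$, so subtracting yields
\[
\mathbf{x}(k+1)-\bar{\mathbf{x}}(k+1) = D\bigl(\mathbf{x}(k)-\bar{\mathbf{x}}(k)\bigr) + \epsilon(W-J)\bigl(\mathbf{z}(k)-\mathbf{x}(k)\bigr) - \alpha\epsilon(I-J)\nabla F(\mathbf{x}(k)),
\]
where $D=(1-\epsilon)I+\epsilon W$. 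Call these three terms $T_1, T_2, T_3$; conditional on $\mathbf{x}(k)$, both $T_1$ and $T_3$ are deterministic while $\mathbb{E}[T_2\mid\mathbf{x}(k)]=0$ by the unbiasedness \eqref{eq-3-52}--\eqref{eq-3-53}. Therefore the cross terms involving $T_2$ vanish and
\[
\mathbb{E}\bigl[\|\mathbf{x}(k+1)-\bar{\mathbf{x}}(k+1)\|^2\mid\mathbf{x}(k)\bigr] = \|T_1+T_3\|^2 + \mathbb{E}\bigl[\|T_2\|^2\mid\mathbf{x}(k)\bigr].
\]

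For the noise piece, I would invoke Lemma \ref{lem-1-1} in the form $\|(W-J)\mathbf{v}\|\le\beta\|\mathbf{v}\|$ and then apply the quantization variance bound coordinate-wise: for type 1, $\mathbb{E}[\|\mathbf{z}-\mathbf{x}\|^2\mid\mathbf{x}(k)]\le N\sigma^2$; for type 2, the same sum gives $\sigma^2\|\mathbf{x}(k)\|^2$, which I would relate to $\sigma^2\|\bar{\mathbf{x}}(k)\|^2$ (plus a consensus-error remainder absorbed into the contracting term). This yields the $4\beta^2\epsilon^2 r(k)$ contribution, where the factor $4$ comes from the crude Young-type absorption used to split $\|\mathbf{z}-\mathbf{x}\|^2$ when handling type 2.

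For the $\|T_1+T_3\|^2$ piece I would use Young's inequality $\|T_1+T_3\|^2\le(1+\lambda)\|T_1\|^2+(1+\lambda^{-1})\|T_3\|^2$ with a choice like $\lambda=c\epsilon$ (or a similar order-$\epsilon$ parameter). Lemma \ref{lem-1-1} gives $\|T_1\|\le(1-2c\epsilon)\|\mathbf{x}(k)-\bar{\mathbf{x}}(k)\|$, so $(1+\lambda)\|T_1\|^2\le(1-c\epsilon)\|\mathbf{x}(k)-\bar{\mathbf{x}}(k)\|^2$ after using $\epsilon\le\frac{1}{2c}$ to drop the $O(\epsilon^2)$ remainder. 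For $T_3$, I bound $\|(I-J)\nabla F(\mathbf{x}(k))\|\le\|\nabla F(\mathbf{x}(k))\|$ and then apply $L$-smoothness together with the definition $C=\|\nabla F(\mathbf{x}_*)\|/L$ to get $\|\nabla F(\mathbf{x}(k))\|^2\le 2L^2\|\mathbf{x}(k)-\mathbf{x}_*\|^2+2L^2 C^2$. Using the orthogonal decomposition $\|\mathbf{x}(k)-\mathbf{x}_*\|^2=\|\mathbf{x}(k)-\bar{\mathbf{x}}(k)\|^2+\|\bar{\mathbf{x}}(k)-\mathbf{x}_*\|^2$ separates the consensus part, which is absorbed back into the $(1-c\epsilon)$-factor using $\alpha\le c/L$, while the $\|\bar{\mathbf{x}}(k)-\mathbf{x}_*\|^2+C^2$ part produces the stated $\tfrac{3L^2\alpha^2\epsilon}{c}$ coefficient after simplifying $(1+\lambda^{-1})\alpha^2\epsilon^2\cdot 2L^2$.

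The main obstacle will be bookkeeping the constants so that the residual consensus term absorbs cleanly into the contraction factor $1-c\epsilon/2$: this requires choosing the Young parameter $\lambda$ of order $\epsilon$ and then verifying that both conditions $\alpha\le c/L$ and $\epsilon\le\frac{1}{2c}$ are enough to dominate the $O(\alpha^2\epsilon)$ remainder on $\|\mathbf{x}(k)-\bar{\mathbf{x}}(k)\|^2$ by $\frac{c\epsilon}{2}\|\mathbf{x}(k)-\bar{\mathbf{x}}(k)\|^2$. Everything else is straightforward expansion, but one must be careful that the type-2 variance bound genuinely produces the form $\sigma^2\|\bar{\mathbf{x}}(k)\|^2$ claimed in $r(k)$, which likely requires an additional $\|\mathbf{x}(k)\|^2\le 2\|\bar{\mathbf{x}}(k)\|^2+2\|\mathbf{x}(k)-\bar{\mathbf{x}}(k)\|^2$ split whose consensus remainder is again absorbed via the $\alpha\le c/L$ and $\epsilon\le\frac{1}{2c}$ constraints.
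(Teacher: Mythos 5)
Your decomposition, the use of conditional unbiasedness to kill the cross terms, the application of Lemma \ref{lem-1-1} to both the contracting piece and the noise piece, and the smoothness bound $\|\nabla F(\mathbf{x}(k))\|\le L(\|\mathbf{x}(k)-\bar{\mathbf{x}}(k)\|+\|\bar{\mathbf{x}}(k)-\mathbf{x}_*\|+C)$ are exactly the paper's proof. The one place where your version diverges, and where it is more fragile, is the absorption step. The paper does \emph{not} apply Young's inequality to $\|T_1+T_3\|^2$; it first bounds the \emph{norm} by $(1-2c\epsilon+L\alpha\epsilon)\|\mathbf{x}(k)-\bar{\mathbf{x}}(k)\|+L\alpha\epsilon\|\bar{\mathbf{x}}(k)-\mathbf{x}_*\|+LC\alpha\epsilon$, so that the gradient's consensus contribution enters \emph{linearly} and $\alpha\le c/L$ immediately yields the prefactor $1-c\epsilon$; only then does it square and use Young on the single cross term, which costs $c\epsilon\|\mathbf{x}(k)-\bar{\mathbf{x}}(k)\|^2+\frac{L^2\alpha^2\epsilon}{c}(\cdot)^2$ and lands cleanly on $1-c\epsilon/2$ under $\epsilon\le\frac{1}{2c}$. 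If instead you split $\|T_1+T_3\|^2\le(1+\lambda)\|T_1\|^2+(1+\lambda^{-1})\|T_3\|^2$ with $\lambda=c\epsilon$, the consensus remainder coming out of $\|T_3\|^2$ is of size $\frac{2L^2\alpha^2\epsilon}{c}\|\mathbf{x}(k)-\bar{\mathbf{x}}(k)\|^2$, which at the admissible boundary $\alpha=c/L$ equals $2c\epsilon\|\mathbf{x}(k)-\bar{\mathbf{x}}(k)\|^2$ and overwhelms the $c\epsilon$ gain from $(1+c\epsilon)(1-2c\epsilon)^2$; you would need to optimize $\lambda$ (roughly $\lambda\approx\sqrt{2}\,c\epsilon/(1-2c\epsilon)$) to recover the contraction for all $\epsilon\le\frac{1}{2c}$. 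So either carry out that optimization explicitly or switch to the norm-level absorption. Two smaller remarks: the factor $4$ in $4\beta^2\epsilon^2 r(k)$ is just a crude over-bound of $\|(W-J)(\mathbf{z}-\mathbf{x})\|^2\le\beta^2\|\mathbf{z}-\mathbf{x}\|^2$, not a Young split; and for type 2 the argument naturally produces $\sigma^2\|\mathbf{x}(k)\|^2$ rather than $\sigma^2\|\bar{\mathbf{x}}(k)\|^2$ --- the paper's own proof stops there (the statement's $\bar{\mathbf{x}}(k)$ appears to be a slip), so your proposed extra split is not needed if you state $r(k)$ with $\|\mathbf{x}(k)\|^2$.
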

 \begin{proof}
 	
 	Multiplying $\frac{1}{N} \bold{1}\bold{1}^\top$ on both sides of \eqref{QDGD2} gives
 	\begin{equation}\label{eq-3-1}
 		\bar{\bold{x}}(k+1) = (1-\ep) \bar{\bold{x}}(k) + \epsilon \bar{\bold{z}}(k) - \frac{\alpha\epsilon}{N} \bold{1}\bold{1}^\top\nabla F(\bold{x}(k)).
 	\end{equation}
 	Combining this with \eqref{QDGD2}, we have
 	\begin{equation*}
 		\begin{split}
 			&\bold{x}(k+1) - \bar{\bold{x}}(k+1)
 			\\
 			& = (1-\epsilon) (\bold{x}(k) - \bar{\bold{x}}(k)) + \epsilon (W\bold{z}(k) - \bar{\bold{z}}(k)) - \alpha \epsilon \left(\bold{I}-\frac{1}{N} \bold{1}\bold{1}^\top\right) \nabla F(\bold{x}(k))\\
 			&= (1-\epsilon) (\bold{x}(k) - \bar{\bold{x}}(k)) + \epsilon (W\bold{x}(k) - \bar{\bold{x}}(k))
 			\\
 			&\quad + \epsilon (W(\bold{z}(k)-\bold{x}(k)) - (\bar{\bold{z}}(k)-\bar{\bold{x}}(k)))- \alpha \epsilon \left(\bold{I}-\frac{1}{N} \bold{1}\bold{1}^\top\right) \nabla F(\bold{x}(k)).
 		\end{split}
 	\end{equation*}
 	Using that $\mathbb{E}\Big[ (W(\bold{z}(k)-\bold{x}(k)) - (\bar{\bold{z}}(k)-\bar{\bold{x}}(k)))|\bold{x}(k)\Big] =0$, we find
 	\begin{equation}
 		\begin{split}\label{eq4}
 			&\mathbb{E}\Big[ \|\bold{x}(k+1) - \bar{\bold{x}}(k+1)\|^2|\bold{x}(k)\Big]
 			\\
 			&=\Big\|(1-\epsilon) (\bold{x}(k) - \bar{\bold{x}}(k)) + \epsilon (W\bold{x}(k) - \bar{\bold{x}}(k)) - \alpha \epsilon \left(\bold{I}-\frac{1}{N} \bold{1}\bold{1}^\top\right) \nabla F(\bold{x}(k))\Big\|^2
 			\\
 			&\qquad + \epsilon^2 \mathbb{E}\Big[ \|(W(\bold{z}(k)-\bold{x}(k)) - (\bar{\bold{z}}(k)-\bar{\bold{x}}(k)))\|^2|\bold{x}(k)\Big].
 		\end{split}
 	\end{equation}
We proceed to estimate the above two terms in the right hand side. First we apply Lemma \ref{lem-1-1} to deduce
 	\begin{equation*}
 		\begin{split}
 			&\Big\|(1-\epsilon) (\bold{x}(k) - \bar{\bold{x}}(k)) + \epsilon (W\bold{x}(k) - \bar{\bold{x}}(k)) - \alpha \epsilon\left(\bold{I}-\frac{1}{N} \bold{1}\bold{1}^\top\right) \nabla F(\bold{x}(k))\Big\|
 			\\
 			&\leq \Big\|(1-\epsilon) (\bold{x}(t) - \bar{\bold{x}}(k)) + \epsilon (W\bold{x}(k) - \bar{\bold{x}}(k)) \Big\| + \Big\| \alpha \epsilon \left(\bold{I}-\frac{1}{N} \bold{1}\bold{1}^\top\right) \nabla F(\bold{x}(k))\Big\|
 			\\
 			&\leq \Big\|[(1-\epsilon)I+\epsilon W] (\bold{x}(k) - \bar{\bold{x}}(k))\Big\| + \alpha \epsilon\Big\|\left(\bold{I}-\frac{1}{N} \bold{1}\bold{1}^\top\right) \nabla F(\bold{x}(k))\Big\|
 			\\
 			&\leq (1-\epsilon + \epsilon \beta) \|\bold{x}(k) - \bar{\bold{x}}(k)\| + \alpha \epsilon \Big\|\nabla F(\bold{x}(t)) \Big\|.
 		\end{split}
 	\end{equation*}
 	 Using the triangle inequality and the smoothness of $f_i$, we get
 	\begin{equation*}
 	\begin{split}
 		\Big\|\nabla F(\mathbf{x}(t))\Big\|& \leq \Big\|\nabla F(\mathbf{x}(t))-\nabla F(\overline{\mathbf{x}}(t))\Big\|+\Big\|\nabla F(\overline{\mathbf{x}}(t))-\nabla F(\mathbf{x}_*)\Big\|+\Big\|\nabla F(\mathbf{x}_*)\Big\|
 		\\
 		&\leq L\Big( \|\bold{x}(k)- \bar{\bold{x}}(k)\| + \|\bar{\bold{x}}(k) - \bold{x}_*\| + C\Big),
 		\end{split}
 	\end{equation*}
 	where $C=\|\nabla F(\mathbf{x}_*)\|/L$.
 	Inserting this into the above estimate, we find
 	\begin{equation*}
 		\begin{split}
 			&\Big\|(1-\epsilon) (\bold{x}(k) - \bar{\bold{x}}(k)) + \epsilon (W\bold{x}(k) - \bar{\bold{x}}(k)) - \alpha \epsilon\left(\bold{I}-\frac{1}{N} \bold{1}\bold{1}^\top\right) \nabla F(\bold{x}(k))\Big\|
 			\\
 			&\leq (1-\epsilon + \epsilon \beta + L \alpha\epsilon) \|\bold{x}(k) - \bar{\bold{x}}(k)\| + L \alpha \epsilon \|\bar{\bold{x}}(k) - \bold{x}_*\| +LC \alpha \epsilon
 			\\
 			&\leq (1-c\epsilon) \|\bold{x}(k) - \bar{\bold{x}}(k)\| + L \alpha \epsilon \|\bar{\bold{x}}(k) - \bold{x}_*\| +LC \alpha \epsilon,
 		\end{split}
 	\end{equation*}
 	where we have set $c:=(1-\beta)/2$ and used that for  $\alpha\leq \frac{1-\beta}{2L}$ we have
 	\begin{equation*}
 		1-\epsilon+\epsilon\beta+L\alpha\epsilon\leq1-(1-\beta)\epsilon/2.
 	\end{equation*}
 	 We square the above inequality to get
 	\begin{equation}\label{eq-3-54}
 		\begin{split}
 			&\Big\|(1-\epsilon) (\bold{x}(k) - \bar{\bold{x}}(k)) + \epsilon (W\bold{x}(k) - \bar{\bold{x}}(k)) - \alpha \epsilon\left(\bold{I}-\frac{1}{N} \bold{1}\bold{1}^\top\right) \nabla F(\bold{x}(k))\Big\|^2
 			\\
 			&\leq (1-c\epsilon )^2 \|\bold{x}(k) - \bar{\bold{x}}(k)\|^2 + L^2 \alpha^2 \epsilon^2 (\|\bar{\bold{x}}(k) - \bold{x}_*\| +C)^2
 			\\
 			&\quad + 2L  \alpha \epsilon (1-c\epsilon  )  \|\bold{x}(k) - \bar{\bold{x}}(k)\|    (\|\bar{\bold{x}}(k) - \bold{x}_*\| +C).
 		\end{split}
 	\end{equation}
 	Using Young's inequality we have
 	\begin{equation*}
 		\begin{split}
 			&2L  \alpha \epsilon  \|\bold{x}(k) - \bar{\bold{x}}(k)\|    (\|\bar{\bold{x}}(k) -\bold{x}_*\| +C) 
 			\\
 			&\leq c \epsilon\|\bold{x}(k) - \bar{\bold{x}}(k)\|^2 + \frac{L^2 \alpha^2 \epsilon}{c}(\|\bar{\bold{x}}(k) - \bold{x}_*\| +C)^2. 
 		\end{split}
 	\end{equation*}
 	Combining this with \eqref{eq-3-54}, we obtain for small $\epsilon\leq\frac{1}{2c}$,
 	\begin{equation}\label{eq-3-55}
 		\begin{split}
 			&\Big\|(1-\epsilon) (\bold{x}(k) - \bar{\bold{x}}(k)) + \epsilon (W\bold{x}(k) - \bar{\bold{x}}(k)) - \alpha \epsilon\left(\bold{I}-\frac{1}{N} \bold{1}\bold{1}^\top\right) \nabla F(\bold{x}(k))\Big\|^2
 			\\
 			&\leq ((1-c\epsilon)^2 + c\epsilon) \|\bold{x}(k) - \bar{\bold{x}}(k)\|^2 + \Big(L^2 \alpha^2 \epsilon^2 + \frac{L^2 \alpha^2 \epsilon}{c}\Big)(\|\bar{\bold{x}}(k) - \bold{x}_*\| +C)^2
 			\\
 			&\leq (1-c\epsilon/2) \|\bold{x}(k) - \bar{\bold{x}}(k)\|^2 + \frac{3L^2 \alpha^2 \epsilon}{2c}(\|\bar{\bold{x}}(k) - \bold{x}_*\| +C)^2
 			\\
 			&\leq (1-c\epsilon/2) \|\bold{x}(k) - \bar{\bold{x}}(k)\|^2 + \frac{3L^2 \alpha^2 \epsilon}{c} \Big( \|\bar{\bold{x}}(k) - \bold{x}_*\|^2 + C^2\Big).
 		\end{split}
 	\end{equation}
 	On the other hand, we have
 	\begin{align}\label{eq-3-56}
 		&\epsilon^2 \mathbb{E}\Big[ \|(W(\bold{z}(k)-\bold{x}(k)) - (\bar{\bold{z}}(k)-\bar{\bold{x}}(k)))\|^2|\bold{x}(k)\Big]\\
 		&\leq 4\epsilon^2\beta^2\mathbb{E}\Big[ \|{\bold{z}}(k)-{\bold{x}}(k)\|^2|\bold{x}(k)\Big]
 		\\
 		&\leq\bigg\{\begin{array}{ll}  4\epsilon^2\beta^2 N\sigma^2 & \textrm{for qunatization of type 1}
 		\\
 		4\epsilon^2\beta^2\sigma^2\|{\bold{x}}(k)\|^2 & \textrm{for quantization of type 2}.
 		\end{array}
 	\end{align}
 	where we used Lemma \ref{lem-1-1} in the first inequality.
 	Gathering the estimates \eqref{eq-3-55} and \eqref{eq-3-56} in \eqref{eq4}, we deduce
 	\begin{equation*}
 		\begin{split}
 			&\mathbb{E}\Big[ \|\bold{x}(k+1) - \bar{\bold{x}}(k+1)\|^2|\bold{x}(k)\Big]
 			\\
 			&= (1-c\epsilon/2) \|\bold{x}(k) - \bar{\bold{x}}(k)\|^2 + \frac{3L^2 \alpha^2 \epsilon}{c} \Big( \|\bar{\bold{x}}(k) - \bold{x}_*\|^2 + C^2\Big)+ 4\epsilon^2\beta^2r(k).
 		\end{split}
 	\end{equation*}
 	The proof is finished.
 \end{proof}
 
 Next we deduce a sequential inequality for the distance between the average of the states and the optimum $\|\bar{\bold{x}}_{k+1} -\bold{x}_*\|^2$.
 \begin{lemma}\label{lem-1-7}  Assume that $\epsilon \alpha \leq \frac{2}{\mu +L}$.
 	Then  we have
 	\begin{align*}
 		&\mathbb{E}\Big[\|\bar{\bold{x}}(k+1) - \bold{x}_*\|^2|\bold{x}(k)\Big] \\
 		&\leq \left(1-\frac{3\eta\alpha\epsilon}{2}\right)\|\overline{\bold{x}}(k)-\bold{x}_*\|^2+2L^2\alpha\epsilon\left(\frac{1}{\mu +L}+\frac{1}{\eta}\right)\|\overline{\bold{x}}(k)-\bold{x}(k)\|^2 + \ep^2 r(\sigma),
 		 	\end{align*}
 	where we have set $\eta=\mu L/(\mu+L)$.
 \end{lemma}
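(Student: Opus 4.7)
The plan is a bias--variance decomposition conditional on $\mathbf{x}(k)$, followed by a contraction estimate for the consensus gradient step and a perturbation bound controlled by the consensus error. Starting from the averaged identity \eqref{eq-3-1} used in the proof of Proposition \ref{lem-1-6}, the unbiasedness $\mathbb{E}[\bar{\mathbf{z}}(k)\,|\,\mathbf{x}(k)] = \bar{\mathbf{x}}(k)$ of the quantization makes the noise cross term vanish, giving
\begin{equation*}
\mathbb{E}\bigl[\|\bar{\mathbf{x}}(k+1) - \mathbf{x}_*\|^2 \,\big|\,\mathbf{x}(k)\bigr] \;=\; \|V(k) - \mathbf{x}_*\|^2 \;+\; \epsilon^2 \mathbb{E}\bigl[\|\bar{\mathbf{z}}(k) - \bar{\mathbf{x}}(k)\|^2 \,\big|\,\mathbf{x}(k)\bigr],
\end{equation*}
where $V(k) := \bar{\mathbf{x}}(k) - \tfrac{\alpha\epsilon}{N}\mathbf{1}\mathbf{1}^\top \nabla F(\mathbf{x}(k))$. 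The independence of the quantizations $Q(x_j(k))$ across agents, together with \eqref{eq-3-52} or \eqref{eq-3-53}, controls the variance term by $\epsilon^2 r(k)$.

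For the bias I would introduce and subtract the consensus gradient, writing $V(k) - \mathbf{x}_* = A + B$ with
\begin{equation*}
A \;=\; (\bar{\mathbf{x}}(k) - \mathbf{x}_*) - \tfrac{\alpha\epsilon}{N}\mathbf{1}\mathbf{1}^\top \nabla F(\bar{\mathbf{x}}(k)), \qquad B \;=\; \tfrac{\alpha\epsilon}{N}\mathbf{1}\mathbf{1}^\top \bigl[\nabla F(\bar{\mathbf{x}}(k)) - \nabla F(\mathbf{x}(k))\bigr].
\end{equation*}
Because each row of $\tfrac{1}{N}\mathbf{1}\mathbf{1}^\top \nabla F(\bar{\mathbf{x}}(k))$ equals $\tfrac{1}{N}\nabla f(\bar{x}(k))$, the term $A$ is one step of gradient descent on the aggregate cost $f$ at the consensus point. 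Under Assumption \ref{sc} and $L$-smoothness of $f$, the stepsize restriction $\alpha\epsilon \leq 2/(\mu+L)$ lets one apply the classical co-coercivity estimate $\langle \nabla f(y), y - x_*\rangle \geq \eta\|y-x_*\|^2 + \tfrac{1}{\mu+L}\|\nabla f(y)\|^2$ to obtain $\|A\|^2 \leq (1-2\eta\alpha\epsilon)\|\bar{\mathbf{x}}(k) - \mathbf{x}_*\|^2$, while the $L$-smoothness of each $f_i$ combined with Cauchy--Schwarz gives $\|B\|^2 \leq L^2\alpha^2\epsilon^2\|\bar{\mathbf{x}}(k) - \mathbf{x}(k)\|^2$.

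Combining the two estimates through Young's inequality $\|A+B\|^2 \leq (1+\theta)\|A\|^2 + (1+\theta^{-1})\|B\|^2$ with the choice $\theta = \eta\alpha\epsilon/2$ produces $(1+\theta)(1-2\eta\alpha\epsilon) \leq 1 - \tfrac{3\eta\alpha\epsilon}{2}$ and $1+\theta^{-1} \lesssim 2/(\eta\alpha\epsilon)$, giving a consensus-error coefficient of order $L^2\alpha\epsilon/\eta$. The extra $\tfrac{1}{\mu+L}$ summand in the stated coefficient $2L^2\alpha\epsilon\bigl(\tfrac{1}{\mu+L}+\tfrac{1}{\eta}\bigr)$ is then produced by retaining (rather than discarding) the negative slack $-\alpha\epsilon\bigl(\tfrac{2}{\mu+L}-\alpha\epsilon\bigr)\|\nabla f(\bar x(k))\|^2$ left in the co-coercivity step and using a second Young split on $2\langle A, B\rangle$, together with the smoothness bound $\|\nabla f(\bar x(k))\| \leq L\|\bar x(k) - x_*\|$ to return to $\|\bar{\mathbf{x}}(k) - \mathbf{x}_*\|^2$.

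The main obstacle is the two-level Young balancing needed to realize simultaneously the sharp contraction $\tfrac{3\eta\alpha\epsilon}{2}$ and the prescribed consensus-error coefficient $2L^2\alpha\epsilon\bigl(\tfrac{1}{\mu+L}+\tfrac{1}{\eta}\bigr)$; all remaining ingredients (unbiasedness of $Q$, co-coercivity, and $L$-smoothness) enter routinely.
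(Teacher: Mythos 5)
Your proposal is correct and follows essentially the same route as the paper: the conditional bias--variance split via the unbiasedness of $Q$, the decomposition $V(k)-\mathbf{x}_*=A+B$, the contraction $\|A\|^2\le(1-2\eta\alpha\epsilon)\|\bar{\mathbf{x}}(k)-\mathbf{x}_*\|^2$, the smoothness bound on $\|B\|^2$, and a Young split with parameter $\theta=\eta\alpha\epsilon/2$ (the paper writes this as the triangle inequality followed by Young's inequality on the cross term $2\|A\|\,\|B\|$, which is the identical computation). The only small inaccuracy is your attribution of the $\tfrac{1}{\mu+L}$ summand to ``retaining the negative slack'' and a second Young split: it arises much more simply, since $(1+\theta^{-1})\|B\|^2=\bigl(L^2\alpha^2\epsilon^2+\tfrac{2L^2\alpha\epsilon}{\eta}\bigr)\|\bar{\mathbf{x}}(k)-\mathbf{x}(k)\|^2$ and the first piece is bounded by $\tfrac{2L^2\alpha\epsilon}{\mu+L}\|\bar{\mathbf{x}}(k)-\mathbf{x}(k)\|^2$ directly from the stepsize restriction $\alpha\epsilon\le\tfrac{2}{\mu+L}$.
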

 \begin{proof}
 	Using \eqref{eq-3-1} we get
 	\begin{equation}\label{eq-3-3}
 		\begin{split} 
 			&\mathbb{E}\Big[\|\bar{\bold{x}}(k+1) - \bold{x}_*\|^2|\bold{x}(k)\Big]\\
 			& =\mathbb{E} \Big[\Big\| \bar{\bold{x}}(k) - \bold{x}_*  - \frac{\alpha\epsilon}{N} \bold{1}\bold{1}^\top\nabla F(\bold{x}(k)) + \ep (\bar{\bold{z}}(k) - \bar{\bold{x}}(k))\Big\|^2|x(k)\Big]
 			\\
 			& = \Big\| \bar{\bold{x}}(k) - \bold{x}_* - \frac{\alpha\epsilon}{N} \bold{1}\bold{1}^\top \nabla F(\bold{x}(k))  \Big\|^2  +\epsilon^2\mathbb{E} \Big[\|   \bar{\bold{z}}(k) - \bar{\bold{x}}(k)\|^2|\bold{x}(k)\Big],
 		\end{split}
 	\end{equation}
 	where we used $\mathbb{E}[\|\overline{\bold{z}}(k)-\overline{\bold{x}}(k)\||\bold{x}(k)]=0$.
 	We apply the triangle inequality to deduce
 	\begin{equation}\label{eq-3-2}
 	\begin{split}
 		&\Big\| \bar{\bold{x}}(k) - \bold{x}_* - \frac{\alpha\epsilon}{N} \bold{1}\bold{1}^\top \nabla F(\bold{x}(k)) \Big\| \\		
 		&\leq \Big\| \bar{\bold{x}}(k) - \bold{x}_* - \frac{\alpha\epsilon}{N} \bold{1}\bold{1}^\top\nabla F(\bar{\bold{x}}(k))\Big\|+ \Big\|\frac{\alpha\epsilon}{N} \bold{1}\bold{1}^\top\left(\nabla F(\bar{\bold{x}}(k))-\nabla F(\bold{x}(k))\right)  \Big\|.
 	\end{split}
 	\end{equation}
 	For the first term in the last line, we have
 	\begin{align*}
 		\Big\| \bar{\bold{x}}(k) - \bold{x}_* - \frac{\alpha\epsilon}{N} \bold{1}\bold{1}^\top\nabla F(\bar{\bold{x}}(k))\Big\|^2&=N\left\|\bar{x}(k)-x_*-\frac{\alpha\epsilon}{N}\sum_{i=1}^N\nabla f_i(\bar{x}({k}))\right\|^2\\
 		&=N\left\|\bar{x}(k)-x_*-\alpha\epsilon\nabla f(\bar{x}(k))\right\|^2.
 	\end{align*}
 	Here, we recall that the aggregate cost $f$ is $\mu$-strongly convex and $L$-smooth. Therefore, by applying a standard argument (see e.g., \cite{Bu}), for stepsizes satisfying $\ep \alpha \leq \frac{2}{\mu +L}$  we have
 	\begin{align*}
 		N\left\|\bar{x}(k)-x_*-\alpha\epsilon\nabla f(\bar{x}_k)\right\|^2
 		&\leq N(1-2\eta\alpha\epsilon)\|\bar{x}(k)-x_*\|^2\\
 		&= (1-2\eta\alpha\epsilon)\|\bar{\bold{x}}(k)-\bold{x}_*\|^2,
 	\end{align*}
	where $\eta=\frac{\mu L}{\mu+L}$. Using the Cauchy-Schwartz inequality and the smoothness of $f_i$, the second term of \eqref{eq-3-2} is bounded as follows:
 	\begin{equation*}
 		\begin{split}
 			\Big\|\frac{\alpha\epsilon}{N} \bold{1}\bold{1}^\top\left(\nabla F(\bar{\bold{x}}(k))-\nabla F(\bold{x}(k))\right)  \Big\|^2 & = N\ \Big\| \frac{\alpha\epsilon}{N}\sum_{i=1}^N (\nabla f_i (\bar{x}(k)) - \nabla f_i (x_{i}(k)))\Big\|^2
 			\\
 			& = \frac{(\alpha\epsilon)^2}{N}\Big\| \sum_{i=1}^N (\nabla f_i (\bar{x}(k)) - \nabla f_i (x_{i}(k)))\Big\|^2
 			\\
 			& \leq (\alpha\epsilon)^2\sum_{i=1}^N \|\nabla f_i (\bar{x}(k)) - \nabla f_i (x_{i}(k))\|^2 \\
 			& \leq (\alpha\epsilon L)^2\|\bar{\bold{x}}(k) - \bold{x}(k)\|^2.
 		\end{split}
 	\end{equation*}
 	Combining the above two estimates with \eqref{eq-3-2}, we achieve the following inequality
 	\begin{equation*}
 		\begin{split}
 			& \Big\| \bar{\bold{x}}(k) - \bold{x}_* - \frac{\alpha\epsilon}{N} \bold{1}\bold{1}^\top \nabla F(\bold{x}(k))   \Big\|^2
 			\\
 			&\leq (1-2\eta\alpha\epsilon)\|\overline{\bold{x}}(k)-\bold{x}_*\|^2+L^2\alpha^2\epsilon^2\|\overline{\bold{x}}(k)-\bold{x}(k)\|^2 +2L \alpha \epsilon\|\overline{\bold{x}}(k)-\bold{x}_*\|\|\overline{\bold{x}}(k)-\bold{x}(k)\|.
 		\end{split}
 	\end{equation*}
 	Using Young's inequality, the right hand side is bounded by
 	\begin{equation*}
 		(1-3\eta\alpha\epsilon/2)\|\overline{\bold{x}}(k)-\bold{x}_*\|^2+\Big(L^2\alpha^2\epsilon^2+\frac{2L^2\alpha\epsilon}{\eta}\Big)\|\overline{\bold{x}}(k)-\bold{x}(k)\|^2.
 	\end{equation*}
 	Since $\epsilon \alpha \leq \frac{2}{\mu +L}$, we have
 	\begin{equation*}
 		\begin{split}
 			&\Big\| \bar{\bold{x}}(k) - \bold{x}_* - \frac{\alpha\epsilon}{N} \bold{1}\bold{1}^\top \nabla F(\bold{x}(k))  \Big\|^2
 			\\
 			&\leq \left(1-\frac{3\eta\alpha\epsilon}{2}\right)\|\overline{\bold{x}}(k)-\bold{x}_*\|^2+L\alpha\epsilon\left(\frac{2L}{\mu +L}+\frac{2L}{ \eta}\right)\|\overline{\bold{x}}(k)-\bold{x}(k)\|^2.
 		\end{split}
 	\end{equation*}
 	Combining this and \eqref{eq-3-3} along with the following inequality 
 	\begin{align*}
 		\epsilon^2\mathbb{E} \Big[\|   \bar{\bold{z}}(k) - \bar{\bold{x}}(k)\|^2|\bold{x}(k)\Big]\leq \bigg\{\begin{array}{ll}N\epsilon^2\sigma^2 &\textrm{for qunatization of type 1}
 		\\
 		\epsilon^2 \sigma^2\|\bar{\bold{x}}(k)\|^2 &\textrm{for qunatization of type 2},
 		\end{array}
 	\end{align*}
 	we get
 	\begin{align*}
 		&\mathbb{E}\Big[\|\bar{\bold{x}}(k+1) - \bold{x}_*\|^2|\bold{x}(k)\Big] \\
 		&\leq \left(1-\frac{3\eta\alpha\epsilon}{2}\right)\|\overline{\bold{x}}(k)-\bold{x}_*\|^2+L\alpha\epsilon \left(\frac{2L}{\mu +L}+\frac{2L}{ \eta}\right)\|\overline{\bold{x}}(k)-\bold{x}(k)\|^2 + \epsilon^2 r(\sigma).
 	\end{align*}
 	This finishes the proof.
 \end{proof}

\section{Proofs of the main theorems}\label{sec-5}
 In this section, we give the proofs of Theorem \ref{lem1-8} and Theorem \ref{thm-1}. For this we take the expectation over the whole time from $0$ to $k$ on the inequalities of Lemma \ref{lem-1-6} and Lemma \ref{lem-1-7} to get
 \begin{equation*}
 	\begin{split}
 		&\mathbb{E}\Big[ \|\bold{x}(k+1) - \bar{\bold{x}}(k+1)\|^2\Big]
 		\\
 		&= (1-c\epsilon/2) \mathbb{E}\Big[\|\bold{x}(k) - \bar{\bold{x}}(k)\|^2\Big] + \frac{3L^2 \alpha^2 \epsilon}{c} \Big( \mathbb{E}\Big[\|\bar{\bold{x}}(k) - \bold{x}_*\|^2\Big] + C^2\Big)+ 4\epsilon^2\beta^2r(k),
 	\end{split}
 \end{equation*}
 and 
 \begin{align*}
 	\begin{split}
 		&\mathbb{E}\Big[\|\bar{\bold{x}}(k+1) - \bold{x}_*\|^2\Big] \\
 		&\leq \left(1-\frac{3\eta\alpha\epsilon}{2}\right)\mathbb{E}\Big[\|\overline{\bold{x}}(k)-\bold{x}_*\|^2\Big]+2L^2\alpha\epsilon\left(\frac{1}{\mu +L}+\frac{1}{\eta}\right)\mathbb{E}\Big[\|\overline{\bold{x}}(k)-\bold{x}(k)\|^2\Big] + \epsilon^2r(k).
 	\end{split}
 \end{align*}
For simplicity of exposition, we define the constants $A_k$ and $B_k$ for $k \geq 0$ by
\begin{equation*}
A_k = \mathbb{E}\|\bar{\mathbf{x}}(k)-\mathbf{x}_*\|^2, \quad B_k = \mathbb{E}\|\mathbf{x}(k) - \bar{\mathbf{x}}(k)\|^2.
\end{equation*}
Using these notations, the above inequalities are written as
  	\begin{equation}\label{eq-3-7} 
 			B_{k+1}  \leq \left(1 - \frac{c\epsilon }{2}\right) B_k + \frac{3L^2 \alpha^2 \epsilon}{c} (A_k + C^2) + 4\beta^2 \epsilon^2 r(k)
 			 			\end{equation}
 			and
 			\begin{equation}\label{eq-3-8}
 			A_{k+1} \leq \left(1- \frac{3\eta \alpha \epsilon}{2}\right) A_k + L \alpha \epsilon \Big( \frac{1}{4} + \frac{L}{2\eta}\Big) B_k +  \epsilon^2 r(k).
 			\end{equation}
Based on the above two inequalities, we prove the boundedness result of Theorem \ref{lem1-8} in the below.
 \begin{proof}[Proof of Theorem \ref{lem1-8}]
 	We argue by an induction. Trivially, $A_0\leq R$ and $B_0\leq qR$ by the definition of $R$. Assume that $A_k \leq R$ and $B_k \leq qR$ for some $k \geq 0$. First we complete the proof for the quantization of type 1. Using \eqref{eq-3-8}, we have
 	\begin{equation*}
 		\begin{split}
 			A_{k+1}& \leq \Big[ \Big( 1- \frac{3}{2} \eta\alpha \epsilon\Big) + q L \alpha \epsilon \Big( \frac{1}{4} + \frac{L}{2\eta}\Big) \Big] R +  N\epsilon^2 \sigma^2
 			\\
 			&= \Big[ 1 + \alpha \epsilon \Big( q L \Big( \frac{1}{4} + \frac{L}{2\eta}\Big) - \frac{3\eta}{2}\Big) \Big] R + N\epsilon^2 \sigma^2 
 			\\
 			&= \Big[ 1- \frac{\alpha \epsilon}{4}\eta\Big] R + N\epsilon^2 \sigma^2
 			  \\ 
 			& = R + \epsilon \Big[ N\epsilon \sigma^2 - \frac{\alpha}{4}\eta R\Big],
 			\end{split}
 			\end{equation*}
 			where we used the definition of $q$ in the second equality. Next we use $\epsilon \leq \alpha$ and the definition \eqref{eq-2-R1} of $R$ to deduce
 			\begin{equation*}
A_{k+1}		\leq R + \epsilon \alpha \Big[ N\sigma^2 - \frac{\eta}{4} R\Big] \leq R.
 			\end{equation*}
 	 Then, we use \eqref{eq-3-7} to estimate  $B_{k+1}$ as
 	\begin{equation*}
 		\begin{split}
 			B_{k+1} & \leq q \Big( 1- \frac{c\epsilon}{2}\Big) R + (3/c) L^2 \alpha^2 \epsilon (R + C^2) + 4N \epsilon^2 \sigma^2
 			\\
 			& = R \Big[ q- qc\epsilon /2 + (3/c) L^2 \alpha^2\epsilon  \Big] +(3/c) L^2 \alpha^2 \epsilon C^2 + 4N \epsilon^2 \sigma^2
 			\\
 			&\leq R [q-qc\epsilon /4] +(3/c) L^2 \alpha^2 \epsilon C^2 + 4N \epsilon^2 \sigma^2
 			\\
 			& = Rq - \epsilon \Big[Rq c /4 - (3/c) L^2 \alpha^2 - 4N \epsilon \sigma^2\Big]
 			\\
 			&\leq Rq,
 		\end{split}
 	\end{equation*} 
 	where the second inequality and the third inequality also hold by the definition \eqref{eq-2-R1} of $R$.
 	\
 	
 	Next we prove the result for the quantization of type 2. Using that $\|\bold{x}(k)\|^2 \leq 2\|\bold{x}(k)-\bold{x}_*\|^2+2\|\bold{x}_*\|^2$ and $2\sigma^2 \epsilon \leq \alpha \eta$, we deduce
 	\begin{equation*}
 		\begin{split}
 			A_{k+1}& \leq \Big[ \Big( 1- \frac{3}{2} \eta\alpha \epsilon\Big) + q L \alpha \epsilon \Big( \frac{1}{4} + \frac{L}{2\eta}\Big) \Big] R +   \epsilon^2 \sigma^2 \|\bold{x}(k)\|^2
 			\\
 			&\leq  \Big[ 1 + \alpha \epsilon \Big( q L \Big( \frac{1}{4} + \frac{L}{2\eta}\Big) - \frac{3\eta}{2}\Big) \Big] R + \epsilon^2 \sigma^2  (2R + 2\|\bold{x}_*\|^2)
 			\\
 			&\leq  \Big[ 1 + \alpha \epsilon \Big( q L \Big( \frac{1}{4} + \frac{L}{2\eta}\Big) - \frac{ \eta}{2}\Big) \Big] R + 2\epsilon^2 \sigma^2  \|\bold{x}_*\|^2\\
 			&= \Big[ 1- \frac{\alpha \epsilon}{4}\eta\Big] R +  2\epsilon^2 \sigma^2 \|\bold{x}_*\|^2
 			\end{split}
 			\end{equation*}
 			where we used the definition of $q = \frac{\eta^2}{L(\eta + 2L)}$ in the last equality. Then, by the relation $\epsilon \leq \alpha$ and the definition \eqref{eq-2-R2} of $R$, we obtain
 			\begin{align*}
			A_{k+1}\leq& R+ \epsilon\alpha\left[ 2\sigma^2\|\bold{x}_*\|^2-\frac{\eta}{4}R \right]\leq R.
 			\end{align*}
 	Finally, we use \eqref{eq-3-8} to estimate  $B_{k+1}$ as
 	\begin{equation*}
 		\begin{split}
 			B_{k+1} & \leq q \Big( 1- \frac{c\epsilon}{2}\Big) R + (3/c) L^2 \alpha^2 \epsilon (R + C^2) + 4\epsilon^2 \sigma^2 \|\bold{x}\|^2
 			\\
 			& = R \Big[ q- qc\epsilon /2 + (3/c) L^2 \alpha^2\epsilon   + 8\epsilon^2 \sigma^2\Big] +(3/c) L^2 \alpha^2 \epsilon C^2 + 8\epsilon^2 \sigma^2 \|\bold{x}_*\|^2
 			\\
 			&\leq R [q-qc\epsilon /4] +(3/c) L^2 \alpha^2 \epsilon C^2 + 8\epsilon^2 \sigma^2 \|\bold{x}_*\|^2
 			\\
 			& = Rq - \epsilon \Big[Rq c /4 - (3/c) L^2 \alpha^2 - 8\epsilon \sigma^2 \|\bold{x}_*\|^2]
 			\\
 			&\leq Rq,
 		\end{split}
 	\end{equation*} 
where we used the assumption of $\epsilon$ and $\alpha$  in the second inequality and used the definition \eqref{eq-2-R2} of $R$ in the last inequality. The proof is done.
\end{proof}

\begin{proof}[Proof of Theorem \ref{thm-1}] We first consider the quantization of type 1. Applying the assumption \ref{ass-2} to \eqref{eq-3-7}, we have
 \begin{align*}
 		B_{k+1}  \leq \left(1 - \frac{c\epsilon }{2}\right) B_k + \frac{3L^2 \alpha^2 \epsilon}{c} (R  + C^2) + 4N\epsilon^2\sigma^2
 \end{align*}
and hence
\begin{align*}
	B_{k+1}  \leq& \left(1 - \frac{c\epsilon }{2}\right)^{k+1} B_0 + \left[1+\left(1 - \frac{c\epsilon }{2}\right)+\left(1 - \frac{c\epsilon }{2}\right)^2+\cdots+\left(1 - \frac{c\epsilon }{2}\right)^k\right]\\
	&\times\left[\frac{3L^2 \alpha^2 \epsilon}{c} (R  + C^2) + 4N\epsilon^2\sigma^2\right]\\
	\leq& \left(1 - \frac{c\epsilon }{2}\right)^{k+1} B_0+\frac{2}{c}\left[\frac{3L^2 \alpha^2}{c} (R  + C^2) + 4N\epsilon\sigma^2\right].
\end{align*}
By applying the above inequality to \eqref{eq-3-8} we obtain
\begin{align*}
	A_{k+1} \leq& \left(1- \frac{3\eta \alpha \epsilon}{2}\right) A_k + L \alpha \epsilon \Big( \frac{1}{4}+ \frac{L}{2\eta}\Big)\left(1 - \frac{c\epsilon }{2}\right)^{k} B_0\\
	&+L \alpha \epsilon \Big( \frac{1}{4}+ \frac{L}{2\eta}\Big)\frac{2}{c}\left[\frac{3L^2 \alpha^2}{c} (R  + C^2) + 4N\epsilon\sigma^2\right]  +  N\epsilon^2\sigma^2.
	\end{align*}
Using this inequality iteratively, we get
\begin{equation*}
\begin{split}
A_{k+1}	\leq&  \left(1- \frac{3\eta \alpha \epsilon}{2}\right)^{k+1}A_{0} +L \alpha \epsilon \Big( \frac{1}{4}+ \frac{L}{2\eta}\Big)\sum_{j=0}^{k} \bigg(1- \frac{3\eta \alpha \epsilon}{2}\bigg)^{k-j} \bigg(1 - \frac{c\epsilon }{2}\bigg)^{j}B_0\\
	&+ \bigg[ L\alpha \ep  \Big( \frac{1}{4}+ \frac{L}{2\eta}\Big)\frac{2}{c}\left[\frac{3L^2 \alpha^2}{c} (R  + C^2)+4N\epsilon\sigma^2\right]   +N\epsilon^2\sigma^2\bigg]\sum_{j=0}^{k} \Big( 1- \frac{3 \eta \alpha \ep}{2}\Big)^j .
	\end{split}
	\end{equation*}
	From this, we easily deduce the following inequality
	\begin{equation*}
	\begin{split}
	A_{k+1} \leq&  \left(1- \frac{3\eta \alpha \epsilon}{2}\right)^{k+1}A_{0}+ (k+1)L \alpha \epsilon \Big( \frac{1}{4}+ \frac{L}{2\eta}\Big)\max\left\{1- \frac{3\eta \alpha \epsilon}{2},1 - \frac{c\epsilon }{2}\right\}^{k}B_0\\
	&+\frac{4L}{3\eta c}\Big( \frac{1}{4}+ \frac{L}{2\eta}\Big) \left[\frac{3L^2 \alpha^2}{c} (R  + C^2)+4N\epsilon\sigma^2\right] +\frac{2N\epsilon\sigma^2}{3\eta\alpha}.
\end{split}
\end{equation*}
For the quantization of type 2, we used the following inequality
\begin{align*}
	r(k)=\sigma^2\|\bold{x}\|^2\leq 2\sigma^2R+2\sigma^2\|\bold{x}_*\|^2.
\end{align*}
Then, applying the similar way for the quantization type 1, we obtain
\begin{align*}
	B_{k+1}\leq& \left(1 - \frac{c\epsilon }{2}\right)^k B_0+\frac{2}{c}\left[\frac{3L^2 \alpha^2}{c} (R  + C^2) + 8\epsilon\sigma^2(R+\|\bold{x}_*\|^2)\right],
\end{align*}
and
 \begin{align*}
 	A_{k+1} \leq&  \left(1- \frac{3\eta \alpha \epsilon}{2}\right)^kA_{0}+ (k+1)L \alpha \epsilon \Big( \frac{1}{4}+ \frac{L}{2\eta}\Big)\max\left\{1- \frac{3\eta \alpha \epsilon}{2},1 - \frac{c\epsilon }{2}\right\}^{k-1}B_0\\
 	&+\frac{4L}{3\eta c}\Big( \frac{1}{4}+ \frac{L}{2\eta}\Big)\left[\frac{3L^2 \alpha^2}{c} (R  + C^2)+4\epsilon\sigma^2(R+\|\bold{x}_*\|^2)\right] +\frac{4(R+\|\bold{x}_*\|^2)\epsilon\sigma^2}{3\eta\alpha}.
 \end{align*}
 The proof is finished. \eqref{eq-2-R2} 
 
\end{proof}
 \section{Numerical experiment}\label{sec-6}

In this section, we provide numerical experiments of the algorithm \eqref{QDGD}. We consider the cost function
\begin{equation*}
f(x)  = \frac{1}{N} \sum_{i=1}^N \|A_i x- y_i\|^2,
\end{equation*}
where $N$ is the number of agents and $A_i$ is an $m\times n$ matrix whose element is chosen randomly following the normal distribution $N(0,1)$  for each $1 \leq i \leq N$. Also, the vector  $y_i \in \mathbb{R}^m$ is generated from the normal distribution $N(0,1)$ for its elements. We have set $N=20$, $m=5$ and $n=10$.

The communication matrix $W$ is described as follows: Each two agents are linked with probability 0.4, and the weights $w_{ij}$ is defined by 
\begin{equation*}
w_{ij}= \left\{ \begin{array}{ll} 1/ \max\{\textrm{deg}(i), \textrm{deg}(j)\}&~\textrm{if}~ i \in N_i,
\\
1- \sum_{j \in N_i} w_{ij}&~\textrm{if}~ i=j,
\\
0& ~otherwise.
\end{array}
\right.
\end{equation*}  

We use the following two kinds of quantizers \cite{ACR,DMR2,RMHP}.
\begin{itemize}
	\item As a quantizer of type 1, we consider the quantizer $Q_\gamma : \mathbb{R}\rightarrow \mathbb{R}$ for $\gamma >0$ given as
	\begin{align*}
		Q_\gamma(x)=\begin{cases}
			k\gamma\qquad&\text{w.p.}\ 1-\frac{x-k\gamma}{\gamma},\\
			(k+1)\gamma\qquad&\text{w.p.}\ \frac{x-k\gamma}{\gamma}.
		\end{cases}
	\end{align*}
	where $k$ is an integer such that $k\gamma\leq x\leq (k+1)\gamma$. It applies to each coordinate if $x \in \mathbb{R}^n$. We note that this quantizer satisfies
	\begin{align*}
		\mathbb{E}\Big[ Q_\gamma(x)\Big] = x\quad \textrm{and}\quad \mathbb{E}\Big[ \|Q_\gamma(x) - x\|^2\Big] \leq \gamma^2/4,
	\end{align*}
	\item As a quantizer of type 2, we consider the quantizer $Q^s=(Q_1^s,...,Q_n^s):\mathbb{R}^n \rightarrow \mathbb{R}^n$ for $s>1$ defined by the following way. For any given $x\in\mathbb{R}^n$, we find an integer $l\in[0,s)$ such that $l/s\leq|x_i|/\|x\|\leq(l+1)/s$. Then the operator $Q_i^s$ is defined as
	\begin{align*}
		Q_i^s(x)=\|x\|\text{sign}(x_i)\xi_i(x,s)
	\end{align*}
	where $\xi_i(x,s)$ is given by
	\begin{align*}
		\xi_i(x,s)=\begin{cases}
			\frac{l}{s} \qquad&\text{w.p.}\ 1-q(|x_i|/\|x\|,s),\\
			\frac{l+1}{s} \qquad&\text{w.p.}\ q(|x_i|/\|x\|,s),
		\end{cases}
	\end{align*}
	and $q(a,s)=as-l$. This quantizer enjoys the following property
	\begin{equation*}
		\mathbb{E}\Big[ Q^s(x)\Big] = x\quad \textrm{and}\quad \mathbb{E}\Big[ \|Q^s(x) - x\|^2 \Big] \leq \min\left(\frac{n}{s^2},\frac{\sqrt{n}}{s}\right) \|x\|^2.
	\end{equation*}
\end{itemize}

We implement a series of experiments using the above two quantizers with various $\gamma$ or $s$. The aim of these experiments is twofold: 1) to support the convergence results obtained in Theorem \ref{thm-1} and 2) to show the effectiveness of our step-size selection strategy, presented in Algorithm 1, in comparison to other traditional step-size selection methods.

\subsection{Experiment 1} We perform the algorithm \eqref{QDGD} with the following choices of stepsizes:
\begin{enumerate}
	\item[Case 1]: $\alpha (k)$ and $\eta(k)$ are generated by Algorithm \ref{algo} with parameters $K=5000$, $M=1000$, and $J=500$, and the initial values $\alpha (0) = \alpha_0$ and $\ep(0) = \ep_0$.
	\item[Case 2]: $\alpha \equiv  \alpha_0$ and $\ep \equiv \ep_0$.
	\item[Case 3]: $\alpha \equiv  \alpha_0/5$ and $\ep \equiv \ep_0$.
	\item[Case 4]:  $\alpha \equiv \alpha_0/5$ and $\ep \equiv \ep_0/5$.
	\item[Case 5]:  $\alpha  (k) = \alpha_0\left(1000/(k+1000)\right)^{1/4}$ and $\ep(k)= \ep_0\left(1000/(k+1000)\right)^{3/4}$,
\end{enumerate} 
where we set $\alpha_0 = 0.01$ and $\ep_0 = 0.5$. We test the algorithm for the first type of quantization $Q_{\gamma}$ with two quantization levels $\gamma =0.01$ and $\gamma = 0.05$. We measure the error $ \sum_{i=1}^{N}\|x_i (k) -x_*\|/N$ for $k \geq 0$ and its graph in the log-scale is presented in  Figure \ref{fig2}.
 \begin{figure}[htbp]
\includegraphics[height=4.5cm, width=6.3cm]{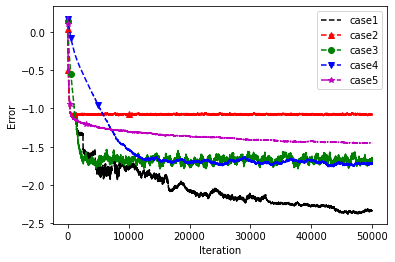}
\includegraphics[height=4.5cm, width=6.3cm]{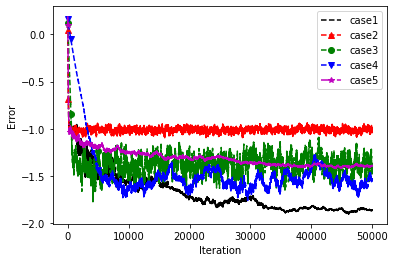} 
\vspace{-0.3cm}\caption{The graphs of the error $\log(\sum_{i=1}^{N}\|x_i (k) -x_*\|/N)$ for the QDGD  with respect to  $k \geq 0$ and stepsize $\alpha (k)$ and $\ep(k)$ given the above cases. The left figure represents results obtained with $\sigma =0.01$, and the right figure represents results obtained with $\sigma =0.05$. \label{fig2}.} 
\end{figure}

Next, we perform the same experiment with the quantization $Q^s$ for two parameters $s=10$ and $s=50$. The graphs of the errors in the log scale  are presented in Figure \ref{fig3}.
\begin{figure}[htbp]
	\includegraphics[height=4.5cm, width=6.3cm]{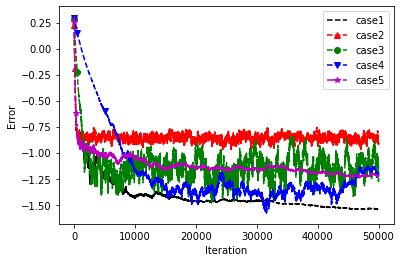} 
\includegraphics[height=4.5cm, width=6.3cm]{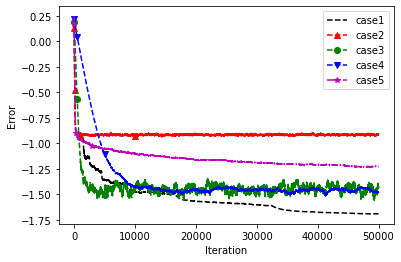} 
\vspace{-0.3cm}\caption{The graphs of the error $\log(\sum_{i=1}^{N}\|x_i (k) -x_*\|/N)$ for the QDGD  with respect to  $k \geq 0$ and stepsize $\alpha (k)$ and $\ep(k)$ for each case. The left figure represents results obtained with $s = 10$, and the right figure represents results obtained with $s=50$.}\label{fig3}
\end{figure}

In the above graphs, we figure out that the algorithm \eqref{QDGD} with the stepsizes chosen by Case 1 are superior to other cases. This confirms the effectiveness of the stepsize selection of Algorithm \ref{algo}. Also the graphs for Cases 2$-$4 supports the convergence results of Theorem \ref{thm-1}.


\subsection{Experiment 2}
In this experiment, we further show that the step of Phase 2 in Algorithm \ref{algo} is essential for the high performance as revealed in Experiment 1. For this, we compare the performance of the algorithm \eqref{QDGD} with two choices of stepsizes: 1) the original algorithm \ref{algo}, and 2) a modified version of algorithm \ref{algo} which excludes Phase 2 and utilizes only Phases 1 and 3.  In the experiment, we use the same parameters  $K, M,J,\alpha_0,$ and $\ep_0$ as in Case 1 of Experiment 1. We again measure the error $\log(\sum_{i=1}^{N}\|x_i (k) -x_*\|/N)$ for the comparison of the performance.

For the quantization of type 1, we set the quantization level as $\gamma=0.01$ and the result is  presented in  Figure \ref{fig4}.   It can be clearly observed that the algorithm \eqref{QDGD} with the  stepsize algorithm given by the original algorithm \ref{algo} which includes the step of Phase 2, outperforms the case with the modified one omitting the step of Phase 2. It emphasizes the effectiveness of Phase 2 in the stepsize selection. 
For the quantization  $Q^s$ of type 2, we set the quantization level as $s=50$ and perform the same experiment. The result given in Figure \ref{fig5} also shows that the performance with the original algorithm \ref{algo} is much better. 



 \begin{figure}[htbp]
	\includegraphics[height=4.5cm, width=6.3cm]{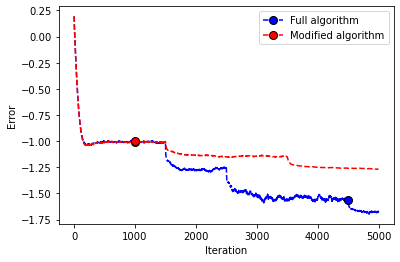}
	\includegraphics[height=4.5cm, width=6.3cm]{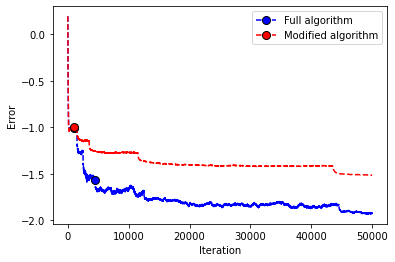} 
	\vspace{-0.3cm}\caption{\label{fig4} Comparison between the full version of the algorithm \ref{algo} and the modified algorithm. The markers on the graphs represent the points where the phase changes. The left graph displays the graph of the logarithmic error from $k=0$ to $k=5000$, while the right graph extends the range to $k=50000$.} 
\end{figure}

 \begin{figure}[htbp]
	\includegraphics[height=4.5cm, width=6.3cm]{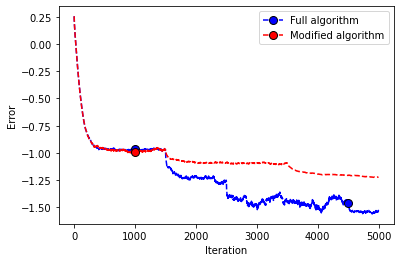}
	\includegraphics[height=4.5cm, width=6.3cm]{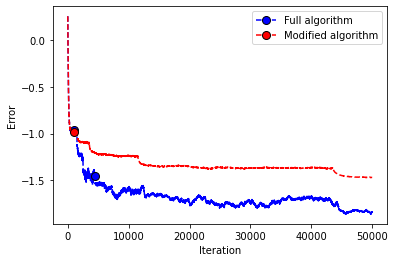} 
	\vspace{-0.3cm}\caption{\label{fig5} Same experiments as in Figure \ref{fig4} but using the second-type quantizer $Q^s$ with $s=50$. We see that the full algorithm still gives a better performance than the modified one.} 
\end{figure}

\end{document}